\newtheorem{theorem}{Theorem}[section]
\newtheorem{lemma}[theorem]{Lemma}
\newtheorem{proposition}[theorem]{Proposition}
\theoremstyle{definition}
\newtheorem{definition}[theorem]{Definition}
\newtheorem{example}[theorem]{Example}
\newcommand{\dd}{{\rm d}}
\newtheorem{remark}[theorem]{Remark}
\numberwithin{equation}{section}
\begin{document}

\author{Vladimir   Bozin  and  Miodrag Mateljev\textrm{i\'{c}}}
%\address{Faculty of mathematics, Univ. of Belgrade, Studentski Trg 16, Belgrade, YU}
%\email{\rm miodrag@matf.bg.ac.rs}
%[Colip property of HQC] Quasiconformal harmonic mapping
\title{Quasiconformal  and HQC mappings between Lyapunov Jordan domains}
%%$C^{1,\alpha}$  V1 (in preparation)

%[2] [2010](2000)[2]
\subjclass[2010]{Primary 30C62, 31C05 .}
%30C80
\keywords{Harmonic maps, quasi-conformal maps, quasihyperbolic-metric, bi-Lipschitz maps,Lyapunov domains}

\thanks{Research partially supported by MNTRS, Serbia,  Grant No. 174 032}

\maketitle

\begin{abstract}
Let $h$  be    a quasiconformal  (qc) mapping of   the  unit disk  $\mathbb{U}$ onto  a Lyapunov domain.  We  show that $h$  maps subdomains     of     Lyapunov type of $\mathbb{U}$,  which touch the boundary of $\mathbb{U}$,
onto  domains of similar type.
In  particular  if $h$ is a harmonic qc  (hqc)  mapping of  $\mathbb{U}$  onto a Lyapunov  domain, using it,
we prove  that $h$ is co-Lipschitz (co-Lip) on  $\mathbb{U}$.
This settles an open intriguing problem.
%posed by Kalaj    in the subject  and can be regarded as  a version of Kellogg- Warschawski %theorem for hqc.
%%%%%

\end{abstract}
\section{Introduction}
%%%%%  lyp  {\rm lyp} (III)
Throughout the paper we consider the following setting  $(\mathbb{U}_{qc})$:   Let  $h:\mathbb{U}\rightarrow D$ be a $K$-qc map, where
$\mathbb{U}$ is the  unit disk   and suppose that  $D$ is a Lyapunov domain(see Definition \ref{dlyp1} below).
 If in addition    $h$  is  harmonic  we say that $h$ satisfies the hypothesis   ($\mathbb{U}_{hqc}$).
Under the  hypothesis  $(\mathbb{U}_{qc})$    we prove that    for every $a\in \mathbb{T}=\{|z|=1\}$,
there is a special Lyapunov domain $U_a$, of a fixed shape,   in the unit disk  $\mathbb{U}$  which touches $a$  and  a special, convex   Lyapunov   domain ${\rm lyp}(D)_b^-$(see
the subsection \ref{ss3.1}, the definition  (v)  before   the the proof  of  Theorem \ref{tmain1})\footnote{${\rm lyp}(D)_b^-= T_b (D_0^-)$, and  $D_0^-$ is defined    in the part (iii)  of  Proposition  3.5.}, of a fixed shape,   in $D$,     which  touches   $b=h(a)$, such that  ${\rm lyp}(D)_b^-\subset h(U_a)\subset H_b$, where  $H_b$  is    a half-plane  whose the boundary line contains $b$.
We can regard this result as ''a  good local approximation of a qc mapping $h$ by its restriction  to  a special Lyapunov   domain  so that its codomain is locally  convex".
In addition, if $h$ is harmonic, using this result,
%we first  prove    that for every $a\in \mathbb{T}$ there is  some %relative neighborhood   $V_a$ of $a$ such that  $h$ is  co-lipschitz on %$V_a$ and therefore
we prove  that $h$ is co-Lip  on  $\mathbb{U}$. This settles an open intriguing problem in the subject  and can be regarded as  a version of the Kellogg- Warschawski theorem for hqc.
%%%%%%
%%%%%%
In order to discuss the subject we first need a few basic definitions (see Section \ref{s_back} for more details).
\begin{definition}[Lyapunov  curves] \label{dlyp1}
(i) Throughout  the paper  by  $\varepsilon,\epsilon, c,c_1,\varepsilon_1,\epsilon_1, \kappa,\kappa_1$  etc.  we denote positive constants  and   by  $\mu,\mu_1$ etc.  constants in the interval $(0,1)$.

(ii)  Suppose that  $\gamma$ is a rectifiable, oriented, differentiable planar curve
given by its arc-length parameterization  $g$.  If
$$l_1={\rm lyp}(\gamma)={\rm lyp}(\gamma,\mu):=\sup_{t,s \in[0,l]}\frac{|g'(t)-g'(s)|}{|t-s|^{\mu}}<\infty,$$
we say that $\gamma$  is  a $C^{1,\mu}$  curve.
%a  Lyapunov curve  of order $\mu$ or $\mu$-Lyapunov) curve and we call  %$lyp(\Gamma)$,  Lyapunov  multiplicative constant.
$C^{1,\mu}$  curves are also known as Lyapunov (we say   also more precisely  $\mu$-Lyapunov) curves. We call  ${\rm lyp}(\gamma)$ the Lyapunov  multiplicative constant.
In this setting  we say that $\gamma$  is   $(\mu,l_1)$-Lyap (of order  $\mu$ with multiplicative constant $l_1$).
We say  that a    bounded planar  domain    $D$ is  $\mu$-Lyapunov   (respectively  $(\mu,l_1)$-Lyap), $0<\mu< 1$,  if it is bounded by   $\mu$-Lyapunov($(\mu,l_1)$-Lyap)  curve  $\gamma$.
In this setting it is convenient    occasionally    to   use  $l_1= l_1(D)$ instead of    ${\rm lyp}(\gamma)$.
\end{definition}

For a complex valued function defined on a domain in   the complex plane   $\mathbb{C}$,    we use the notation  $\lambda_f = l_f (z)= |\partial f (z)|   -
|\bar\partial f (z)|$ \, and   $ \Lambda_f (z)= |\partial f (z)|   +
|\bar\partial f (z)|,$   if $\partial f(z)$   and  $\bar\partial
f (z)$ exist.\\
%%%%%
%%%%%
%\begin{definition}[Special    domains  of Lyapunov type]
%For $\varepsilon,c> 0$  and $C|\varepsilon|^\mu<  \pi/2$, we use the notation  $L= L(\varepsilon)= {\rm Lyp}(\varepsilon,c)=\{w : c|w|^\mu<arg(w)<\pi-c|w|^\mu\,, |w|<\varepsilon\}$.
Note that    ${\rm Lyp}(\varepsilon,c)$ is  a special  domain of  Lyapunov   type   with two cusps  and  vertex  at $0$.
%\end{definition}
\begin{definition}[Elementary   Lyapunov  curves  and Special Lyapunov  domains]
%For  $c>0$, $0<\mu < 1$,  and $x_0 > 0$,      the curve $f(c,\mu)$  in the xy-plane is  defined  by\\
%(1)   $y=c |x|^{1+\mu}$,  $|x|< x_0$ .
The curve  $\gamma(c,\mu)=\gamma(c,\mu,r_0) $  is  defined, in polar coordinates $(r,\varphi)$, by joining the curves $\varphi= c r^{\mu}$  and  $\pi-
\varphi= c r^{\mu}$, $0\leq r < r_0$,  which share the origin(see Example \ref{ex1}  for more details).
%, has, near the origin, similar properties to the curve defined by (1).
An arc  $L$, which is     isometric   to the curve  $\gamma(c,\mu)$ we call an elementary   Lyapunov (more precisely $\mu$-Lyapunov)  curve. If $A$ is the isometry we call  $b=A(0)$ the vertex of $L$.  If an arc $C$ is a circle arc or  elementary   $\mu$-Lyapunov  for some  $ 0<\mu<1$  we call it an elementary   Lyapunov arc.

For $\varepsilon,c> 0$  and $c|\varepsilon|^\mu<  \pi/2$, we use the notation
\begin{itemize}
\item[(i)]
$L_0= L(\varepsilon)= {\rm Lyp}(\varepsilon,c,\mu)=\{w : c|w|^\mu<arg(w)<\pi-c|w|^\mu\,, |w|<\varepsilon\}$.
\end{itemize}
If this set is subset  of  $H$, $D$  it seems convenient to  denote it  shortly  by  $H_0$,  $D_0$ respectively.

%Roughly speaking,
A   special    domain of Lyapunov   type (with possible two cusps)
%smooth
is a  convex   domain whose the boundary consists of two  elementary   Lyapunov  curves. If  the part of boundary of a  Lyapunov ($\mu$-Lyapunov) domain is  an elementary   Lyapunov  curve with vertex at $b$, we call it  special  Lyapunov($\mu$-Lyapunov  with elementary arc)  domain with  vertex at $b$.
 \end{definition}
%defined by (1)
Note that the curve  $\gamma(c,\mu)$    is  $C^{1,\mu}$  but it is not  $C^{1,\mu_1}$  for   $\mu_1>\mu$ (at the origin), and
${\rm Lyp}(\varepsilon,c,\mu)$ is  a special  domain of  Lyapunov   type   with two cusps  and  vertex  at $0$.

As an application  of the Gehring-Osgood inequality\cite{GOs,Vu1} concerning  qc mappings  and quasi-hyperbolic distances,
in  the  particular case   of   punctured planes,  we  prove Proposition \ref{prop1}(we refer to this result as (GeOs)), which roughly stated says that: \\
if  $f$ is  a  $K$-qc mapping of the plane such that   $f(0)=0$,
$f(\infty)=\infty$   and   $z_1,z_2\in
\mathbb{C}^*$, then       the measures  of the
convex angles   between $f(z_1),f(z_2)$   and   $z_1,z_2$  can be compared. Using this we prove
the part  \rm{(IV)}  of  Theorem  \ref{thmain0}(we  shortly refer this result  as  (S-0)),  which  can be considered  as    our main result,   and  Theorem \ref{tmain1}  which  is a global version of  (S-0).

Theorem \ref{tmain1}  gives an approximation  of Lyapunov domains  by  special  Lyapunov domains and  it  is  a   crucial result  for the  application to hqc  mappings, stated here as: \begin{itemize}
%(i) (harmonic quasiconformal)
\item[(S-1)]   Suppose that $D$  is  a  Lyapunov   domain  and
  $h:\mathbb{U}\rightarrow D$ is a qc homeomorphism.
 Then   for every $a\in \mathbb{T}=\{|z|=1\}$,
  there is  a special Lyapunov   domain $U_a$, of a fixed shape,   in the unit disk  $\mathbb{U}$  which touches $a$  and  a special, convex   Lyapunov   domain ${\rm lyp}(D)_b^-$, of a fixed shape,   in $D$,    which  touches   $b=h(a)$, such that  ${\rm lyp}(D)_b^-\subset h(U_a)\subset H_b$, where  $H_b$  is    a half-plane  whose the boundary line contains $b$.  Using this  we reduce the proof of   co-Lip  property
  %(S1) to
\item[(L0)]: if   $h$ satisfies the hypothesis   ($\mathbb{U}_{hqc}$) then it is co-Lip,
 \end{itemize}
to what we call  locally convex case.
%(v) in the the proof of
 In order  to avoid confusion,  note that  in addition  Theorem \ref{tmain1} states  that  there is  a special, convex   Lyapunov   domain ${\rm lyp}(D)_b$(
 see Definition  \ref{dlyp2}), of a fixed shape,   in $D$, which  touches   $b=h(a)$, such that
 $h(U_a)\subset {\rm lyp}(D)_b \subset H_b$  (see figure  1). But we do not use this part in the proof  of (L0).
  %(ii)

  Set    $d_b(w)= dist (w,{\rm lyp}(D)_b^-)$,   and  for $a=h^{-1}(b)$, $d'_b(w)= dist (w, h(U_a))$.  By  an elementary  argument  one can prove:
\begin{itemize}
  \item[(L1)]  If  $w-b$ is in the direction of the normal vector $n_b$ of  $\partial D $  at $b$,   then
  %Lemma \ref{le_Lyp1},
$d_b(w)\approx |w-b|$  if $ |w-b|$  is  small enough.
\end{itemize}
%%%%%%
%%%%%%% bX2

Note  that   the subject of hqc mappings has been intensively studied by the participants of the Belgrade
Analysis Seminar (see Section \ref{s_back} for more details),
%\footnote{M. Pavlovi\'c, V. Markovi\'c, D. Kalaj, V. Bo\v zin,M. Arsenovi\'c, M. Markovi\'c, N. Laki\'c, D.  \v Sari\'c, M. Kne\v zevi\'c, V. Todor\v cevi\'c, M. Laudanovi\'c, M. Svetlik, I. Ani\'c,etc},cf. \cite{MMfilInv}
in particular by Kalaj, who  proved that if  $h$ is a hqc mapping of the unit disk onto  a  Lyapunov domain, then $h$ is   Lipschitz \cite{kamz} (Kalaj also probably first posed the problem of whether $h$ is, in fact,  bi-Lipschitz).
Since there is   a conformal mapping  of   the  unit disk  $\mathbb{U}$ onto  a $C^1$  domain  which is not  Lipschitz,  Kalaj's   result is nearly optimal.
In \cite{KK'_HQM2010},    it is shown  that a harmonic diffeomorphism $h$ between two $C^2$ Jordan domains is a ($K,K'$) quasiconformal mapping for some constants $K \geq 1$ and $K' \geq 0$ if and only if $h$ is bi-Lipschitz continuous (note that ($K,0$) qc is $K$-qc).
% An interesting problem  appears    in  the subject    concerning hqc %mapping:
These results naturally lead to  the following question (conjecture):

Question 1.   If $h:\mathbb{U}\rightarrow D$ is a hqc homeomorphism, where
$D$ is a  Lyapunov domain, is $h$ co-Lipschitz  (shortly co-Lip)?
%result. the

In  Theorem \ref{thmain2}  we give   an affirmative answer  to Question 1.

%Proposition  \ref{pro.nap1}
%\asymp
%Idea of proof of Theorem \ref{thmain2}.
%We also  use

The  following simple statements play an important role in the proof of Theorem \ref{thmain2} (co-Lip).
%%%%%%% eX2
%It  is convenient  to introduce the following hypothesis:
% first
%%%%%%bX3

%Proposition 5 \cite{napoc1}]
%\label{pro.nap1}
 Proposition 5 \cite{napoc1}  states that  if $h$ is a harmonic univalent orientation preserving $K$%
-qc mapping of domain $D$ onto $D^{\prime }$, then   $d(z)\Lambda _{h}(z)\approx d_{h}(z)$,  $z\in D$. We need only a corollary of this:
\begin{itemize}
\item[(S-2)]  $d(z)\Lambda _{h}(z)\succeq d_{h}(z)$,  $z\in D$.
\end{itemize}
%%%%%%

%Using   slightly modification
%of the proof of  Theorem 1.1  \cite{napoc1} (planar case)   we  show  %Theorem  \ref{thm.space1}.
%%%%%

Using   a slightly modification
of the proof of  Theorem 1.1  \cite{napoc1} (planar case)  and  Kellogg's theorem  we can  derive
%from
%Theorem \ref{thm.space1}  Proposition \ref {Harprop1}  stated here as:
%%%%%
\begin{itemize}
\item[(S-3)]    Suppose  that   $\,h\,$ is    a euclidean harmonic mapping
from a Lyapunov domain  $G$  into  a domain $D$   and  there is a half space $H_b$  which touches a  point  $b\in  \partial D$  such that   $D=h(G)\subset H_b$.
%the curve    \mathbb{B}
Then  $d(h(z),b) \succeq d_G(z)$,    $z \in G$.
\end{itemize}
   We    say that a domain D  is locally convex  at  a point   $b\in  \partial D$  if there is  a  half  space $H_b$  such that  $D\subset H_b$. %

For the convenience of the reader we summarize     that   (S-1),(S-2)  and  (S-3),   are the main ingredients in the proof   of    Theorem \ref{thmain2}   stated here  as
\begin{theorem}\label{thmain2}
Suppose $h:\mathbb{U}\rightarrow D$ is a hqc homeomorphism, where
$D$ is a  Lyapunov domain with $C^{1,\mu}$ boundary. Then $h$ is co-Lipschitz.
\end{theorem}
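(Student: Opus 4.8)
The plan is to combine the three ingredients (S-1), (S-2), (S-3) exactly as advertised, reducing the global co-Lipschitz estimate to the locally convex situation provided by (S-1). Fix $a\in\mathbb{T}$ and put $b=h(a)$. By (S-1) there is a special Lyapunov domain $U_a\subset\mathbb{U}$ touching $a$, of a shape independent of $a$, and a special convex Lyapunov domain $G_b:={\rm lyp}(D)_b^-\subset D$ touching $b$, again of fixed shape, with
\[
G_b\subset h(U_a)\subset H_b,
\]
where $H_b$ is a half-plane through $b$. The key point is that $U_a$ has a fixed shape, so the constants produced below will not depend on $a$; this uniformity is what upgrades a family of local estimates to a genuine (global) co-Lipschitz bound.

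Next I would apply (S-3) with $G=U_a$ (a Lyapunov domain) and the harmonic map $h|_{U_a}:U_a\to h(U_a)\subset H_b$: since $h(U_a)$ lies in the half-space $H_b$ touching $b$, (S-3) gives
\[
d\bigl(h(z),b\bigr)\ \succeq\ d_{U_a}(z),\qquad z\in U_a,
\]
with implied constant depending only on the (fixed) Lyapunov data of $U_a$, the qc constant $K$, and the geometry of $D$. Because $U_a$ has a fixed shape touching $a$, there is $c>0$, independent of $a$, so that the segment $[(1-t)a,\, a]$, $0<t<t_0$, lies in $U_a$ and $d_{U_a}\bigl((1-t)a\bigr)\succeq t$ for small $t$; hence $d\bigl(h((1-t)a),b\bigr)\succeq t$. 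Taking $t\to$ the appropriate scale and letting $z\to a$ radially, one extracts
\[
\liminf_{z\to a}\ \frac{d\bigl(h(z),h(a)\bigr)}{d(z)}\ >\ 0,
\]
uniformly in $a\in\mathbb{T}$; equivalently, using that $d(z)\approx d(z,\mathbb{T})=1-|z|$ near the boundary and that $d_h(z)\le |h(z)-h(a)|$ for a suitable boundary point, one gets $d_h(z)\succeq d(z)$. Here (S-2), $d(z)\Lambda_h(z)\succeq d_h(z)$, enters to convert this boundary comparison into the pointwise lower bound $\Lambda_h(z)\succeq 1$, i.e. a uniform lower bound on the (larger) singular value of $Dh$; combined with the standard chain that a qc harmonic map with $\Lambda_h$ bounded below is co-Lipschitz (the inverse $h^{-1}$ is then Lipschitz, using $K$-quasiconformality to control $\lambda_h$ by $\Lambda_h$), we conclude that $h$ is co-Lipschitz on all of $\mathbb{U}$.

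The main obstacle I anticipate is not any single inequality but the bookkeeping of \emph{uniformity}: one must check that the special domains $U_a$ and $G_b$ in (S-1) really can be chosen with shape and size independent of $a$ (this is where the Gehring--Osgood comparison of angles and Theorem~\ref{tmain1} do the heavy lifting), and that the constants in (S-3) depend only on the Lyapunov/qc data and not on the particular point. A secondary technical point is passing from the interior estimate $d(h(z),b)\succeq d_{U_a}(z)$ to a clean statement about $d_h(z)$ and then radial behaviour at $a$: one needs that $h(z)\to b$ as $z\to a$ within $U_a$ (continuity up to the boundary, which follows since $D$ is a Jordan domain and $h$ is qc) and that the nearest-boundary-point direction is comparable to the normal direction $n_b$, which is exactly the content of (L1). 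Once these uniformities are in hand, the deduction of Theorem~\ref{thmain2} is short.
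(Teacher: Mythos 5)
Your overall skeleton — combine (S-1), (S-2), (S-3) and reduce to the locally convex situation, with uniformity coming from the fixed shape of $U_a$ — is exactly the strategy of the paper, but there is a genuine gap at the step where you convert the Harnack-type bound into a bound on $d_h$. From (S-3) along the radius you obtain $|h(z)-b|\succeq d(z)$ with $b=h(a)$, and you then claim ``one gets $d_h(z)\succeq d(z)$'' by invoking $d_h(z)\le |h(z)-h(a)|$. That inequality points the wrong way: a lower bound on the distance of $h(z)$ to the single point $b$ gives no lower bound on $\mathrm{dist}(h(z),\partial D)$, because nothing in the ingredients you listed prevents the image of the radius from approaching $b$ tangentially, hugging $\partial D$, in which case $d_h(z)$ could be much smaller than $|h(z)-b|$. (A posteriori, once co-Lipschitz is known, this cannot happen, so the argument as written is circular at this point.) Your appeal to (L1) does not repair this, since (L1) concerns points $w$ with $w-b$ along the inner normal $n_b$, and the points $h((1-t)a)$ you work with are not known to lie on, or near, that normal.

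The paper closes exactly this gap by running the argument on the image side rather than along radii: it takes $w=b+\epsilon n_b$ on the inner normal, so that Lemma \ref{le_Lyp1} (i.e.\ (L1)) gives $\mathrm{dist}(w,\partial\,{\rm lyp}(D)_b^-)\approx|w-b|$, sets $z=h^{-1}(w)$, and applies the gradient estimate (S-2) to the \emph{restriction} of $h$ to the fixed-shape domain $\hat U_a$, so that the relevant image distance is $\mathrm{dist}(w,\partial h(\hat U_a))\ge \mathrm{dist}(w,\partial\,{\rm lyp}(D)_b^-)$, while the domain-side distance $d_{\hat U_a}(z)$ is compared to $1-|\phi_a^{-1}(z)|$ via the conformal map $\phi_a$ (Kellogg, fixed shape) and bounded above by the Harnack estimate of Theorem \ref{thm.space1}. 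This yields $\Lambda_h\succeq 1$ at all points whose images lie on short normal segments, and a compactness/covering argument (the normal segments of length $\epsilon_0$ cover a collar of $\partial D$, the rest of $\mathbb{U}$ is compact) gives the uniform bound $\lambda_h\approx\Lambda_h\ge s_4>0$ on all of $\mathbb{U}$. To fix your write-up you would have to either prove that $h$ maps the radius at $a$ into a nontangential approach region at $b$ (which your listed ingredients do not give), or restructure the argument as above, pulling back normal points of the image instead of pushing forward radial points of the disk.
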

\begin{remark}
Note that, in general,  $h(U_a)$ is not convex  and  we can not  apply  %Theorem \ref{t.in.belowA} $\rm{(ii)}$
our consideration \cite{napoc1} (see the proof of Theorem 1.2 there)
%\cite{revroum01}
directly; but  $h(U_a)\subset H_b$   is locally  convex  at $b$  and we can apply (S3)
(note that  we do not use the fact   that   ${\rm lyp}(D)_b^-$  is  a  convex  Lyapunov domain).
% $\mathcal{D}_1$ $\hat{D}_0$
\end{remark}
  Recall  that a  mapping $h$  which is ($\mathbb{U}_{qc}$) satisfies  (S-1). If $h$ is in addition harmonic  then we can apply  (S-3). This is crucial for the proof  of theorem  and it reduces the proof to the locally-convex case.

%, i.e. belonging to   $\mathcal{D}_1$
%Kalaj \cite{kamz} proved that  $h$ is  Lipschitz.
%%%%%%eX3

\begin{definition}[Hypothesis  ($\mathbb{H}_{qc}$),($\mathbb{H}_{hqc}$), ($\mathbb{H}_{qc}^0$), ($\rm{H}$-0),($\mathbb{U}_{qc}$),   ($\mathbb{U}_{hqc}$)  and ($\rm{U}$-1)]
\label{1.3}
It is convenient to consider the following  hypothesis
\begin{itemize}
%that
\item [(Sp0):]   $0 \in \partial D$ and  $D$ has the real axis as a tangent at
$0$, with inner normal pointing upwards.
\item [($\mathbb{H}_{qc}$):]  Let  $h:\mathbb{H}\rightarrow D$ be   $K$-qc map, where
$\mathbb{H}$ is the upper -half plane  and suppose $D$ is a Lyapunov domain
%Jordan domain with $C^{1,\mu}$
with the boundary  boundary $\partial D$   positively oriented.
\end{itemize}
Using rotation  and translation,   we can reduce the study  of the behaviour  of $h$ at a point $a\in \mathbb{R}$   to the following setting
%quote
\begin{itemize}
\item [(Lyp-0):]  $D$  is a Lyapunov domain  and  satisfies (Sp0).
%Suppose   that $0 \in  \partial D$ and that $D$ has the real axis as a tangent at  $0$, with inner normal pointing upwards.
\end{itemize}
%{\bf
If, in addition to   ($\mathbb{H}_{qc}$),      $h$ is harmonic,  we say that  $h$ satisfies the hypothesis   ($\mathbb{H}_{hqc}$).\\
If $h$ satisfies the hypothesis   ($\mathbb{H}_{qc}$) (respectively   ($\mathbb{H}_{hqc}$)),   $D$ satisfies the hypothesis (Lyp-0)  and  $h(0)=0$, we say that  $h$ satisfies the hypothesis
%(${qc}(\mathbb{H}, D;Lyp,0$), or  shortly
($\mathbb{H}_{qc}^0$),   (respectively    ($\rm{H}$-0)).\\
If  $\mathbb{H}$ is replaced by   $\mathbb{U}$ we denote  the corresponding hypotheses by  ($\mathbb{U}_{qc}$), ($\mathbb{U}_{hqc}$); and $\mathbb{U}_{qc}^0$ (respectively $\rm{U}$-1) if in addition $h(1)=0$.
%%%%%$h(0)=0$
\end{definition}
Note that in this paper we consider only the planar case.
The  plan of the   exposition is as follows:
In Section   \ref{s_back},   we consider the   background, definitions    and basic properties   of Lyapunov  domains   and we   prove     Proposition \ref{prop1}, which may be considered to be a version of the  Gehring-Osgood inequality  related to the measures of the
corresponding angles.
 %Version 1 of
In Section \ref{sec2},   we prove   Theorem  \ref{thmain0} and  Theorem \ref{tmain1}.
%, and   outline a   proof of    Theorem   \ref{thmain2}(co-Lip).
In  Section   \ref{s_co_lip}  we give
%Version 2 of
the proof of    Theorem   \ref{thmain2}(co-Lip).

The second author   communicated  the main result of this paper at CMFT 2017.\footnote{see  Cmft2017, Jule 10-15,
Lublin, Poland (see http://cmft2017.umcs.lublin.pl), plenary speakers.}
%as a plenary lecture

We also suggest  to the interested  reader to make rough picture and scheme with corresponding  notations in order to follow the manuscript;  and first  to read Section  3   without proofs  and and then Section 4 with all  details and finally  to consider complete proofs  and  technical details  in Section 3.

\section{Background}\label{s_back}
%\bigsqcup
%%%%%%%%%%%%%%%%%%%%%%%%%%%%%%% {rckm-fil}

The next example which is shortly discussed in \cite{gol,rckm-fil},  shows that  there is  a conformal map of unit disk onto $C^1$  domain which is not    bi-Lipschitz.
\begin{example}\label{e_nLip}
Set
$$ w=A(z)= \frac{z}{\ln \frac{1}{z}},\quad w(0)=0 \,.$$
Note  $\ln \frac{1}{z}= - \ln z$,    $w'(z)=- (\ln z)^{-1} + (\ln
z)^{-2}$  and  $w'(z) \rightarrow 0$  if  $z\rightarrow 0$
throughout $\mathbb{H}$.
For $r$
small enough $A$ is univalent in $U_r^+$. We can check that there
is a smooth domain $D\subset U_r^+$ such that  interval $(-r_0,r_0)$, $r_0
>0$,  is a part of the boundary  of $D$,    $D^*= A(D)$  is $C^1$
domain  and $A$ is not co-Lipschitz on $D$.
\end{example}
%%%%%%%%%%%%%%%%%%%%%%%%%%%%%%%
For basic properties of qc  mappings  the reader can consult   Ahlfors's lovely book
\cite{Ahl2006}.
Let $\gamma$ be a Jordan curve. By the Riemann mapping theorem there exists a
Riemann conformal mapping of the unit disk onto the Jordan domain $G = {\rm int} \gamma$. By
Caratheodory's theorem it has a continuous extension to the boundary. Moreover, if
$\gamma \in C^{n,\alpha}$, $n\in \mathbb{N},\,\, 0\leq \alpha < 1$, then the Riemann conformal mapping has a $C^{n,\alpha}$  extension
to the boundary (this result is known as Kellogg's theorem), see \cite{war}.
Conformal  mappings are quasiconformal and harmonic. Hence quasiconformal harmonic
(abbreviated by HQC) mappings are a natural generalization of conformal  mappings.
%%%%%%%%%
\begin{remark} Note that\\
a) The proof of  Kellogg's theorem for conformal mapping  is not elementary and it is based on some techniques which we can not adapt for hqc.\\
b)Since  there is  a conformal map of unit disk onto $C^1$  domain which is not    bi-Lipschitz (Example \ref{e_nLip} above), it seems  that the hypothesis  that domains are Lyapunov is essential.\\
By a) and b) in mind,  it seems that   we need new approaches  to study  hqc  mappings.
\end{remark}
%%%%%%%%%%%%%%%%%%%%%%
Recall that HQC
%harmonic quasiconformal
%(abbreviated by HQC)
mappings are now a very active area of investigation and some  new methods have been  developed for studying this subject (see for example \cite{MMfilInv} and literature cited there).
%[22]
Concerning the background we mention only a few  results which are closely related  to our results:\\
It seems  that   O. Martio \cite{Om}  was the first one  who  considered
%observed  that  a
HQC mapping of the unit disk
%is co-lip
and M. Pavlovi\'c  proved in  \cite{MP}  that it is  Lipschitz.
An  asymptotically  sharp variant have been obtained by Partuka and Sakan  \cite{part0}.
Among other things Kne\v zevi\'c and the second
author in \cite{KnMa} showed that a $K$-qc  harmonic mapping of the unit disk onto itself is a
$(1/K, K)$   quasi-isometry with respect to the Poincar\'{e} and Euclidean metrics.  For   bi-lipschitz approximations  of quasiconformal  maps see     Bishop \cite{bish}.
M.  Mateljevi\'c  \cite{napoc1}  and   V. Manojlovi\'c  \cite{man} showed  that hqc  mappings are  Bi-Lipschitz with respect to  quasi hyperbolic metrics.
Since the composition of a harmonic mapping and a conformal mapping is itself
harmonic, using the case of the unit disk and Kellogg's theorem, these theorems  can be generalized to the class of mappings from arbitrary Jordan domains with  Lyapunov  boundary onto the unit disk. However the composition of a conformal and a  harmonic mapping is not, in general, a harmonic mapping. This means in particular,  that results of this kind for arbitrary image domains do not follow directly  from the case in which the codomain is   the unit disk or the upper half-plane and Kellogg's theorem.
%%%%%
%In particular,in [15] we
In \cite{km.anal06}, Kalaj and the second author  show how to combine  Kellogg's theorem with the so called inner type estimate and that the  simple proof in the case
of the upper half-plane has an analogue for $C^2$ domains; namely, they
 proved  a version of the "inner estimate" for quasi-conformal diffeomorphisms, which  satisfies  a  certain
%M. Mateljevic´ / Filomat 29:9 (2015), 1953–1967 1959
estimate concerning their Laplacian. As an application of this estimate, it is  shown  that quasi-conformal harmonic mappings between smooth domains (with respect to the approximately analytic metric), have bounded partial derivatives; in particular, these mappings are Lipschitz. The  discussion in \cite{km.anal06} includes harmonic mappings with respect to (a) spherical  and Euclidean  metrics (which are approximately analytic) as well as (b) the metric induced by the holomorphic quadratic differential.
%%%%% b Ins 1\\

%Although the evidence of the  following two  statements are relatively
%  %%%%%RCK
Although  the  following two  statements
did not get attention immediately after their publications,
%they played
it turns out, surprisingly,  that  they play an important role in the proof of Theorem \ref{thmain2} (co-Lip).
\begin{proposition}[Corollary 1,  Proposition 5
\cite{napoc1}; see also  \cite{man}]
\label{c.gd} \textit{Every e-harmonic quasi-conformal mapping of
the unit disc (more generally of a strongly hyperbolic domain) is
a quasi-isometry with respect to  the hyperbolic distance}.
\end{proposition}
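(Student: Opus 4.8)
The plan is to prove the corresponding estimate for the quasihyperbolic metric $k$ first and then transfer it to the hyperbolic (Poincar\'{e}) metric $\rho$, using that on the unit disc --- and, more generally, on any strongly hyperbolic domain --- one has $\rho\approx k$ as metrics; the same holds for the image domain $D'=h(D)$ (which, when $D=\mathbb{U}$, is simply connected, so $\rho_{D'}\approx k_{D'}$ by the Koebe distortion theorem). Throughout set $d(z)=\mathrm{dist}(z,\partial D)$ and $d_h(z)=\mathrm{dist}(h(z),\partial D')$, and note that $h$ is a smooth diffeomorphism since its coordinate functions are harmonic; we may assume $h$ is orientation-preserving.

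I would begin by recording the pointwise distortion of the quasihyperbolic density. By Proposition~5 of \cite{napoc1}, quoted above, $d(z)\Lambda_h(z)\approx d_h(z)$ on $D$; together with the pointwise quasiconformality inequality $\frac1K\Lambda_h(z)\le\lambda_h(z)\le\Lambda_h(z)$ this produces constants $c,C>0$, depending only on $K$ and the Lyapunov data, such that
\[
\frac{c}{K}\cdot\frac{1}{d(z)}\;\le\;\frac{\lambda_h(z)}{d_h(z)}\;\le\;\frac{\Lambda_h(z)}{d_h(z)}\;\le\;\frac{C}{d(z)}\,,\qquad z\in D.
\]
Since $h$ is a homeomorphism, $\gamma\mapsto h\circ\gamma$ is a bijection from the paths in $D$ joining $z_1$ to $z_2$ onto the paths in $D'$ joining $h(z_1)$ to $h(z_2)$, and along any such $\gamma$ we have $\lambda_h(z)\,|dz|\le|d(h\circ\gamma)|\le\Lambda_h(z)\,|dz|$; hence by the displayed estimate
\[
\frac{c}{K}\int_\gamma\frac{|dz|}{d(z)}\;\le\;\ell_{k_{D'}}(h\circ\gamma)=\int_\gamma\frac{|d(h\circ\gamma)|}{d_{D'}(h(z))}\;\le\;C\int_\gamma\frac{|dz|}{d(z)}\,,
\]
and in particular $\gamma$ has finite $k_D$-length precisely when $h\circ\gamma$ has finite $k_{D'}$-length, so no rectifiability is lost. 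Taking the infimum over $\gamma$ gives $\frac{c}{K}\,k_D(z_1,z_2)\le k_{D'}(h(z_1),h(z_2))\le C\,k_D(z_1,z_2)$; thus $h$ is bi-Lipschitz from $(D,k_D)$ onto $(D',k_{D'})$.

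Finally, the comparabilities $\rho_D\approx k_D$ on $D$ and $\rho_{D'}\approx k_{D'}$ on $D'$ convert the last display into $\rho_{D'}(h(z_1),h(z_2))\approx\rho_D(z_1,z_2)$, which is exactly the claimed quasi-isometry property (indeed bi-Lipschitz, with no additive term). I expect the only real obstacle to be Proposition~5 of \cite{napoc1} itself, i.e.\ the two-sided density estimate $d(z)\Lambda_h(z)\approx d_h(z)$: the upper bound $d(z)\Lambda_h(z)\preceq d_h(z)$ is a Schwarz--Pick--type gradient estimate available for harmonic maps, whereas the reverse bound --- equivalently, after using $K$-quasiconformality, $d(z)\Lambda_h(z)\succeq d_h(z)$, stated above as (S-2) --- is the more delicate ``inner estimate'', where harmonicity (and not merely quasiconformality) is essential. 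Granting that, the remainder is the routine length-integral comparison above together with the standard fact that strongly hyperbolic (in particular simply connected) domains have comparable quasihyperbolic and hyperbolic metrics; compare also \cite{man}.
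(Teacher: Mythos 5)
The paper does not actually prove this proposition---it is quoted verbatim from \cite{napoc1} (see also \cite{man})---and your argument reproduces exactly the route of that source and of the ingredients the paper itself extracts from it: the pointwise estimate $d(z)\Lambda_h(z)\approx d_h(z)$ (Proposition 5 of \cite{napoc1}) together with $K$-quasiconformality gives a two-sided comparison of quasihyperbolic densities, path integration then gives the quasihyperbolic bi-Lipschitz property, and the comparability $\rho\approx k$ on the disc (Koebe, since the image is simply connected) respectively on strongly hyperbolic domains transfers this to the hyperbolic metric. So the proposal is correct and takes essentially the same approach; the only remarks worth making are that the constants depend on $K$ alone (no ``Lyapunov data'' enters here), and that the step you defer---the lower bound $d(z)\Lambda_h(z)\succeq d_h(z)$, i.e.\ (S-2)---is precisely the nontrivial content of the cited result rather than something proved in this paper.
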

%VMfil2009  \cite{napoc1,man} (Proposition \ref{c.gd} here)
%See also  \cite{man}  and
\begin{theorem}[\cite{revroum01}]\label{t.in.belowA}
%$(ii.1)$
Suppose  that   $\,h\,= f + \overline{g}$ is    a Euclidean
orientation preserving  harmonic mapping from $\mathbb{U}$ onto the
bounded convex  domain $D=h(\mathbb{U})$, which contains a disc
$\,B(h(0);R_0)\,$.
\begin{itemize}
\item[(I)]  Then $ |f'| \geq\ R_0/4$ on     $\mathbb{U}$.
\item[(II)]Suppose, in addition,  that  $h$  is qc.  Then  $l_h\geq (1-k)
|f'|\geq (1-k)R_0/4$  on  \,$\mathbb{U}$.
\item[(III)]  In particular,    $h^{-1}$ is Lipschitz.
\end{itemize}
\end{theorem}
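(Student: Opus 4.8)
The plan is to concentrate all the work on part (I); parts (II) and (III) then follow at once. Once (I) is known, write the qc dilatation as $|g'|\le k|f'|$ (here $k<1$); then $l_h=|f'|-|g'|=|f'|\,(1-|g'/f'|)\ge(1-k)|f'|\ge(1-k)R_0/4$ by (I), which is (II). For (III), Lewy's theorem makes $h$ a diffeomorphism with $\Lambda_{h^{-1}}(w)=1/l_h(h^{-1}(w))\le 4/((1-k)R_0)$; since $D$ is convex, integrating $|\nabla h^{-1}|\le\Lambda_{h^{-1}}$ along the segment joining two points of $D$ shows $h^{-1}$ is Lipschitz with constant $4/((1-k)R_0)$.

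For (I) I would first translate so that $h(0)=0$, so $B(0,R_0)\subset D$. Then I use two elementary reductions. \emph{(a)} Since $|f'|=\tfrac12(\Lambda_h+l_h)\ge\tfrac12\Lambda_h$ and, for every real $\nu$, $\Lambda_h=|f'|+|g'|\ge|e^{-i\nu}f'+e^{i\nu}g'|$, it suffices to exhibit, for each $z$, a direction $e^{i\nu}$ with $|e^{-i\nu}f'(z)+e^{i\nu}g'(z)|\ge R_0/2$. \emph{(b)} Convexity supplies the direction: given $p\in\partial D$ with supporting line $\ell$ and outer unit normal $e^{i\nu}$, the function $v(z)=\operatorname{Re}\!\big(e^{-i\nu}(h(z)-p)\big)$ is positive harmonic on $\mathbb{U}$, with $v(0)=\operatorname{dist}(0,\ell)\ge R_0$ because $B(0,R_0)\subset D$ lies on the positive side of $\ell$. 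Writing $v=\operatorname{Re}\Phi$ with $\Phi=e^{-i\nu}f+e^{i\nu}g-e^{-i\nu}p$ analytic, the Herglotz representation gives $\Phi(z)=ic+\int_{\mathbb{T}}\tfrac{\zeta+z}{\zeta-z}\,d\mu(\zeta)$ with $\mu\ge0$, $\mu(\mathbb{T})=v(0)\ge R_0$, and $e^{-i\nu}f'+e^{i\nu}g'=\Phi'(z)=\int_{\mathbb{T}}\tfrac{2\zeta}{(\zeta-z)^2}\,d\mu(\zeta)$.

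The arithmetical heart is the kernel identity: for $\zeta=e^{it},\zeta_0=e^{it_0}\in\mathbb{T}$ one has $\tfrac{2\zeta}{(\zeta-\zeta_0)^2}=-\,\overline{\zeta_0}\,/\,\big(2\sin^2\tfrac{t-t_0}{2}\big)$ — a \emph{positive multiple of the single unit vector $-\overline{\zeta_0}$}, of modulus $\ge\tfrac12$. Hence if $z$ tends to a point $\zeta_0\in\mathbb{T}$ along a path on which $v(z)\to0$ (this is exactly when $h(z)$ tends to the contact point of $\ell$, which forces $\mu$ to be thin at $\zeta_0$ so that the mass of $\mu$ near $\zeta_0$ does not contribute in the limit), the integrated vectors cease to cancel and $|\Phi'(z)|\to\int_{\mathbb{T}}\big(2\sin^2\tfrac{t-t_0}{2}\big)^{-1}d\mu\ge\tfrac12\mu(\mathbb{T})\ge R_0/2$, so that $|f'(z)|\ge\tfrac12|\Phi'(z)|\to R_0/4$ in the limit. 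To pass from this boundary behaviour to the stated pointwise bound I would use that $\log|f'|$ is harmonic on $\mathbb{U}$ (by Lewy $f'$ never vanishes): applying the minimum principle on each disc $\{|z|<r\}$ and letting $r\to1$ propagates the estimate to all of $\mathbb{U}$, giving $|f'|\ge R_0/4$ everywhere. The half-plane mapping $h_0(z)=\operatorname{Re}\tfrac{z}{1-z}+i\operatorname{Im}\tfrac{z}{(1-z)^2}$, for which $f_0'(z)=(1-z)^{-3}$, $R_0=\tfrac12$ and $\min_{\mathbb{U}}|f_0'|=|f_0'(-1)|=\tfrac18=R_0/4$, shows that $\tfrac14$ is sharp: it is the product of the two $\tfrac12$'s above, both attained there, with $\mu=\tfrac12\delta_{1}$ antipodal to $\zeta_0=-1$.

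The step I expect to be the real obstacle is this boundary analysis, precisely because $D$ is assumed only convex (corners allowed), not smooth: one must justify that $h$ extends to a homeomorphism of the closures (true for a harmonic homeomorphism onto a bounded convex, hence Jordan, domain) and, more delicately, control the portion of the Herglotz measure $\mu$ concentrated near $\zeta_0$ so that the clean inequality $|\Phi'|\ge\tfrac12\mu(\mathbb{T})$ survives the limit. A convenient way to make this rigorous is to run the minimum principle first: the infimum of $|f'|$ on $\{|z|\le r\}$ is attained at some $\zeta_r$ with $|\zeta_r|=r$; extracting $\zeta_r\to\zeta_0^*$, one then only has to analyse the single positive harmonic function attached to a supporting line of $D$ at $h(\zeta_0^*)$, whose Herglotz measure is thin at $\zeta_0^*$ because $v$ extends continuously there with value $0$. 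The remaining ingredients — the reductions \emph{(a)}, \emph{(b)} and the kernel identity — are short and robust.
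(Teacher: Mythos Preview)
The paper does not itself prove this theorem --- it is quoted from \cite{revroum01} as background --- so there is no proof here to compare against. Your derivations of (II) and (III) from (I) are correct and standard; the issue is the argument for (I).

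Your strategy is natural and several ingredients are right: supporting lines of the convex range produce positive harmonic $v=\operatorname{Re}\Phi$ with $v(0)\ge R_0$; the reduction $|f'|\ge\tfrac12\Lambda_h\ge\tfrac12|\Phi'|$; the kernel identity $2\zeta/(\zeta-\zeta_0)^2=-\bar\zeta_0/(2\sin^2\tfrac{t-t_0}{2})$; and the use of the minimum principle for $\log|f'|$. But the passage to the boundary has a genuine gap. Knowing that $v$ has radial limit $0$ at $\zeta_0^\ast$ tells you only that the Herglotz measure $\mu$ has symmetric derivative $0$ there; this does \emph{not} make the near part $\int_{|\zeta-\zeta_0^\ast|<\delta}2\zeta(\zeta-z)^{-2}\,d\mu$ negligible, because the kernel is of order $(1-|z|)^{-2}$. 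Worse, on the radius to $\zeta_0^\ast$ the kernel at $\zeta=\zeta_0^\ast$ points in the direction $+\bar\zeta_0^\ast$, \emph{opposite} to the far contribution $-\bar\zeta_0^\ast$, so cancellation is a real danger: your sentence ``the integrated vectors cease to cancel'' is precisely the unproved assertion. One can salvage the \emph{nontangential} bound $\liminf_{r\to1}|\Phi'(r\zeta_0)|\ge v(0)/2$ via Julia--Carath\'eodory (the Cayley transform $\varphi$ of $\Phi$ has $\varphi(0)=0$, hence angular derivative $\ge1$), but then your minimum-principle step still does not close: the minimisers $\zeta_r$ may approach $\zeta_0^\ast$ tangentially, and since $\log|f'|$ is not known a priori to be bounded you cannot invoke a Phragm\'en--Lindel\"of principle to pass from radial boundary bounds to all of $\mathbb{U}$. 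Finally, your ``sharp'' example maps onto an unbounded half-plane, so it lies outside the hypotheses and does not establish sharpness of $R_0/4$.
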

See  also Partyka and Sakan \cite{part1}.

Concerning the Lipschitz property  of   hqc,  Kalaj \cite{kamz} proved:
%$h$ is  Lipschitz. , i.e. belonging to$\mathcal{D}_1$
\begin{theorem}\label{thmainDK}
Suppose $h:D_1\rightarrow D_2$ is a hqc homeomorphism, where $D_1$
and $D_2$ are domains with $C^{1,\mu}$ boundary.
\begin{itemize}
\item[(I)] Then $h$ is Lipschitz.\\
\item[(II)]  If, in addition, $D_2$ is convex, then $h$ is bi-Lipschitz.
\end{itemize}
\end{theorem}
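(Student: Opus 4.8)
The plan is to deduce the co-Lipschitz property from a uniform interior lower bound $\lambda_h\ge c_0>0$ on the inner dilatation, and to extract that bound near $\mathbb T$ from the local picture of $h$ furnished by (S-1) together with (S-2) and (S-3).

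\emph{Reduction to $\lambda_h\succeq 1$ on $\mathbb U$.} By Lewy's theorem $h$ is a real-analytic diffeomorphism of $\mathbb U$ onto $D$ with $\lambda_h=|\partial h|-|\bar\partial h|>0$ everywhere, and, being qc onto a Jordan domain, it extends to a homeomorphism $\overline{\mathbb U}\to\overline D$. That $h$ is co-Lipschitz is equivalent to $h^{-1}$ being Lipschitz on $D$; since a Lyapunov domain is $C^1$, hence quasiconvex, this is in turn equivalent to $|Dh^{-1}|=1/(\lambda_h\circ h^{-1})$ being bounded on $D$, i.e. to the existence of $c_0>0$ with $\lambda_h(z)\ge c_0$ for all $z\in\mathbb U$ (integrate $Dh^{-1}$ along paths in $D$ of length comparable to the chord). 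Interior regularity makes $\lambda_h$ continuous and strictly positive, hence bounded below on every compact subset of $\mathbb U$; so everything reduces to establishing $\lambda_h\succeq 1$ for $z$ in a one-sided neighbourhood of $\mathbb T$.

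\emph{Localisation and the image-distance estimate.} Write $d(z)=1-|z|$ and $d_h(z)=\operatorname{dist}(h(z),\partial D)$. Quasiconformality gives $\lambda_h\ge\frac{1-k}{1+k}\,\Lambda_h$, and (S-2) gives $\Lambda_h(z)\succeq d_h(z)/d(z)$; hence $\lambda_h(z)\succeq d_h(z)/d(z)$, and it suffices to prove $d_h(z)\succeq d(z)$ for $z$ near $\mathbb T$. Fix such a $z$, put $a=z/|z|\in\mathbb T$ and $b=h(a)\in\partial D$, and invoke (S-1): there is a fixed-shape special Lyapunov domain $U_a\subset\mathbb U$ touching $a$ with ${\rm lyp}(D)_b^-\subset h(U_a)\subset H_b$. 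A routine computation with the explicit model ${\rm Lyp}(\varepsilon,c,\mu)$ shows that, once $1-|z|$ is small enough, $z$ lies on the inward radial segment at $a$ — the ``axis'' of $U_a$ — so $z\in U_a$ and $d_{U_a}(z):=\operatorname{dist}(z,\partial U_a)\asymp d(z)$. Now $h|_{U_a}\colon U_a\to h(U_a)\subset H_b$ is a euclidean harmonic map of a Lyapunov domain into a half-space whose boundary line contains $b=h(a)\in\partial\bigl(h(U_a)\bigr)$, so (S-3) applies and yields $|h(z)-b|\succeq d_{U_a}(z)\asymp d(z)$. (This use of (S-3) is the local replacement for the convex-target inner estimate of \cite{revroum01}; cf.\ Theorem \ref{t.in.belowA}.)

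\emph{The main obstacle.} It remains to turn $|h(z)-b|\succeq d(z)$ into $d_h(z)\succeq d(z)$. Because $\partial D\in C^{1,\mu}$, near $b$ it is the graph of a function $\psi$ over the tangent line $T_b$ with $|\psi(x)|\le c|x|^{1+\mu}$, and a short argument with the nearest boundary point gives $d_h(z)\ge\langle h(z)-b,\,n_b\rangle-c\,|h(z)-b|^{1+\mu}$ for $|h(z)-b|$ small, $n_b$ the inner unit normal. So it is enough to know that $h(z)$ reaches $b$ non-tangentially with aperture independent of $z$, i.e. $\langle h(z)-b,\,n_b\rangle\ge\alpha\,|h(z)-b|$ for a fixed $\alpha>0$; since $|h(z)-b|\to0$ as $z\to a$, this gives $d_h(z)\succeq|h(z)-b|\succeq d(z)$, and the two reductions above then close the argument. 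I expect this transversality (cone) estimate to be the real difficulty: (S-3) by itself controls only the distance to the single point $b$, and without it $d_h(z)\succeq d(z)$ can genuinely fail (a point of $D$ may be far from $b$ yet close to $\partial D$). This is where the rest of (S-1) and the elementary estimate (L1) enter — the inclusion $h(U_a)\subset H_b$ keeps $h(z)$ on the inner side of $T_b$, while the companion inclusion ${\rm lyp}(D)_b^-\subset h(U_a)$ together with (L1) (the distance from ${\rm lyp}(D)_b^-$ to a point on the normal at $b$ is $\asymp|w-b|$) pins down the transversality of $h(U_a)$ at $b$ and forces the required aperture. Everything else is bookkeeping with the quasiconformality inequality, (S-2), and (S-3).
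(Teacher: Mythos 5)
Your proposal does not prove the statement at issue. Theorem \ref{thmainDK} is Kalaj's theorem, quoted here from \cite{kamz}: part (I) asserts that $h$ is Lipschitz, i.e.\ a uniform upper bound on $\Lambda_h$, and part (II) asserts bi-Lipschitz when $D_2$ is convex. Nowhere in your argument is an upper bound for $\Lambda_h$ even attempted; everything you write aims at the lower bound $\lambda_h\succeq 1$ (the co-Lipschitz half), so part (I) -- which is also half of part (II) -- is simply missing. The paper does not reprove this theorem; it cites \cite{kamz} and records that the Lipschitz part rests on Mori's theorem for qc mappings together with a geometric lemma about Lyapunov domains (the route used in \cite{MP} for $D_1=D_2=\mathbb U$), a mechanism entirely absent from your proposal. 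Moreover, for the co-Lipschitz half of part (II) the convexity of $D_2$ makes the machinery (S-1)/(S-3) unnecessary: Theorem \ref{t.in.belowA} gives $l_h\ge(1-k)|f'|\ge(1-k)R_0/4$ directly. What you have written is, in substance, an attempt at the paper's new Theorem \ref{thmain2} (co-Lipschitz onto a possibly non-convex Lyapunov target), not at Theorem \ref{thmainDK}; invoking (S-1), i.e.\ Theorem \ref{tmain1}, to prove a prior result that is independent of that machinery is misplaced here, and you also silently replace the general $C^{1,\mu}$ source $D_1$ by $\mathbb U$ without the (easy, Kellogg-based) reduction.

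Even judged as a proof of the co-Lipschitz bound, the argument has the gap you yourself flag. Applying (S-2) globally forces you to upgrade $|h(z)-b|\succeq d(z)$ to $d_h(z)=\operatorname{dist}(h(z),\partial D)\succeq d(z)$, which needs a uniform non-tangential (cone) approach of $h(z)$ to $b$; you do not prove this, and (L1) together with ${\rm lyp}(D)_b^-\subset h(U_a)$ only controls distances to ${\rm lyp}(D)_b^-$ for points on the inner normal at $b$, not the direction of approach of $h(z)$ for arbitrary $z$ on the radius at $a$. The paper's proof of Theorem \ref{thmain2} avoids exactly this difficulty by localizing: the distortion estimate is applied to the restriction $h|_{\hat U_a}$, so only $d'_b(w)=\operatorname{dist}\bigl(w,\partial h(\hat U_a)\bigr)\ge d_b(w)=\operatorname{dist}(w,\partial D_b^-)$ is needed, the bound $d_b(w)\succeq|w-b|$ along the normal is supplied by the elementary Lemma \ref{le_Lyp1}, and the estimate is then propagated from normal segments to a full collar by the argument around (E0)--(F1). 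So the transversality step you defer as ``bookkeeping'' is precisely the obstruction the paper circumvents, and as written your argument neither establishes Theorem \ref{thmainDK} nor closes the co-Lipschitz case.
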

With this theorem in mind  Question 1 is natural.
 The proof of part (a) of Theorem \ref{thmainDK} in \cite{kamz} is based on an application of Mori's theorem on quasiconformal mappings, which  has also been used in \cite{MP} in the case $D_1=D_2=\mathbb{U}$,  and a geometric lemma related to Lyapunov domains.
%%%%%%%%   Rel result

\subsection{Notation}
%%%%
%In order to discuss the subject we need first
Here we give  a few basic definitions.
\begin{definition}[qc]
(i) By   $\mathbb{C}$  we denote the  the complex plane  and by  $\mathbb{T}$  the unit circle.  For $r>0$ and $w \in \mathbb{C}$,  we denote  by   $B(w,r)$ and  the  $C(w,r)$  the disk and  circle of radius $r$ with center at $w$.

(ii)  By   $\mathbb{C}^*$  we denote the punctured  complex plane  $\mathbb{C}\setminus \{0\}$,    by  $\mathbb{H}^*$   the lower half plane  $\{z: {\rm Im} z<0\}$ and by  $\mathbb{U}^+$  the upper  half disk  $\{z: {\rm Im} z> 0, |z|<1\}$.

(iii)  Recall that,  for a complex valued function $h$  defined on a domain in   the complex plane   $\mathbb{C}$,    we use the notation
 $$\lambda_h = l_h (z)= |\partial h(z)|-|\bar\partial h(z)| \quad \mbox{and}\quad   \Lambda_h (z)= |\partial h (z)|   +
|\bar\partial h (z)|,$$   if $\partial h(z)$   and  $\bar\partial
h (z)$ exist.
%%%%%
A  homeomorphism  $h \colon D\rightarrow G, $ where $D$ and $G$ are
subdomains of the complex plane $\mathbb C,$ is said to be
$K$-quasiconformal ($K$-qc  or  $k$-qc), $K\ge 1$, if $f$ is
absolutely continuous on a.e. horizontal and a.e. vertical line in
$D$  and  there is  $k \in[0,1)$  such that

\begin{equation}\label{defqc0}
|h_{\bar z}|\le k|h_z|\quad \text{a.e. on $D$},
\end{equation}
where $K=\frac{1+k}{1-k}$,    i.e. $k=\frac{K-1}{K+1}$.

Note that the condition (\ref{defqc0}) can be written as

\begin{equation}
D_{h}:=\frac{\Lambda_h}{\lambda_h } = \frac{ |h_{z} | + |
h_{\overline{z}} | }{| h_{z}| - |h_{\overline{z}}|} \leq K,
\end{equation}
where $K=\frac{1+k}{1-k}$,  i.e. $k=\frac{K-1}{K+1}$.

(iv)   Let $\Omega \subset \mathbb{R}^{n}$ and $\mathbb{R}^{+}=[0,\ \infty )$ and $f,\
g:\Omega \rightarrow \mathbb{R}^{+}$. If there is a positive constant $c$
such that $f(x)\leq c\,g(x)\,,\ x\in \Omega \,$, we write $f\preceq g$ on $%
\Omega $. If there is a positive constant $c$ such that

\begin{equation*}
\frac{1}{c}\,g(x)\leq f(x)\leq c\,g(x)\,,\quad x\in \Omega \,,
\end{equation*}%
we write $f\approx g$ (or $f\approx g$ ) on $\Omega $.
\end{definition}
 To gain some intuition about  Lyapunov curves   we give a basic   example:
%the curve $f(c,\mu)$ For  $c>0$, $0<\mu < 1$,  and $x_0 > 0$ the curve $f(c,\mu)$  in the xy-plane is  defined  by ,  $|x|< x_0$,
\begin{example}\label{ex1}
For  $c>0$, $0<\mu < 1$,  and $x_0 > 0$,  the curve $f(c,\mu)=f(c,\mu,x_0)$  in the xy-plane which  is  defined  by\\
 (1)   \quad                   \quad       \quad   \quad    $y=c |x|^{1+\mu}$,  $|x|< x_0$,\\
is  $C^{1,\mu}$ at the origin but is not  $C^{1,\mu_1}$  for    $\mu_1  >\mu$.  It is convenient to write  this equation using polar
coordinates  $z= r e^{i \varphi}$  in the form:  $r \sin\varphi= c r^{1+\mu} (\cos \varphi)^{1+\mu}$.
 Next, if  $0\leq \varphi \leq \pi/2$,  we have
$\sin\varphi= c r^{\mu} (\cos \varphi)^{1+\mu}$, $0\leq r < r_0$, where $r_0$ is a positive number.
Since   $\sin\varphi= \varphi +o(\varphi)$  and  $\cos \varphi=1
+ o(1)$,   we  find  $\varphi= c r^{\mu}+o(1)$    when  $\varphi\rightarrow 0$.
%%%%%%%%%%%%%%%%%%%%%%%%%%%%%%%%%%%%%%%%%%%%%%%%%%%%%%%%%%%%%%
If    $\pi/2 \leq \varphi \leq \pi$,  we  have
$\sin(\pi-  \varphi) =\sin\varphi= c r^{\mu} (\cos \varphi)^{1+\mu}$, $0\leq r < r_0$, where $r_0$ is a positive number.
Since   $\sin (\pi- \varphi)= \pi- \varphi +o(\pi- \varphi)$  and  $\cos \varphi=-1
+ o(1)$,  we find    $\pi-\varphi= c r^{\mu}+o(1)$      when  $\varphi\rightarrow \pi$.
%%%%%%%%%%%%%%%%%%%%%%%%%%%%%%%%%%%%%%%%
The curve $\gamma(c,\mu)=\gamma(c,\mu,r_0)$
defined by joining the curves $\varphi= c r^{\mu}$  and  $\pi-
\varphi= c r^{\mu}$, $0\leq r < r_0$,  which share the origin,
has similar properties  near the origin to the curve defined by
(1). The reader can check that the curves $f(c,\mu)$   and    $\gamma(c,\mu)$  are   $C^{1,\mu}$
at the origin but are  not  $C^{1,\mu_1}$  for $\mu_1 >\mu$.
%We can

Note that if a curve satisfies $\varphi \leq c r^{\mu}$, then it
is  is below the curve $\gamma(c,\mu)$.
\end{example}
\subsection{Gehring-Osgood inequality}
We can compute the quasihyperbolic metric $k$ on $\mathbb{C}^*$ by
using the covering $\exp : \mathbb{C}\rightarrow \mathbb{C}^*$,
where $ \exp$ is the exponential function. Let $z_1,z_2\in
\mathbb{C}^*$, $z_1= r_1 e^{it_1}, z_2= r_2 e^{it_2} $  and
% Then
$\theta =\theta(z_1,z_2) \in [0,\pi]$ the measure of the convex angle
between $z_1,z_2.$ We use
%will prove
$$k(z_1,z_2)= \sqrt{\Big|\ln \frac{r_2}{r_1}\Big|^2 + \theta
^2} .$$ This well-known formula is due to Martin and Osgood.
%, see \cite[(3.12), p. 36]{vaisala}.

Let  $\ell=\ell(z_1)$ be the line  defined by  $0$ and $z_1.$  Then $z_2$
belongs to one half-plane, say $M$,  on which $\ell=\ell(z_1)$ divides
$\mathbb{C}.$

Locally, denote by $\ln$ a branch of $\mathrm{Log}$ on $M.$ Note
that $\ln$ maps $M$ conformally   onto a horizontal strip of width
$\pi.$ Since $w=\ln z$, we find that the quasi-hyperbolic  metric
$$|\dd w|= \frac{|\dd z|}{|z|}.$$
Note that  $\rho(z)= \frac{1}{|z|}$ is the
quasi-hyperbolic  density for $z\in \mathbb{C}^*$ and therefore
$$k(z_1,z_2)= |w_1 - w_2|=|\ln z_1 - \ln z_2|.$$
Let $z_1,z_2\in \mathbb{C}^*$,   $w_1= \ln z_1= \ln r_1 + i t_1.$
Then   $z_1= r_1 e^{it_1}$ and there is $t_2\in [t_1, t_1 +\pi)$ or
$t_2\in [t_1-\pi, t_1 )$  such that  $w_2= \ln z_2= \ln r_2 + i t_2$ .
Hence
$$k(z_1,z_2)= \sqrt{\Big|\ln \frac{r_2}{r_1}\Big|^2 + (t_2 -t_1 )^2} \,.$$
%and therefore  this
Now  using the  quasi-hyperbolic distance $k$    as a corollary of the Gehring-Osgood inequality, we can prove the following result which we will need.

\begin{proposition}\label{prop1}
Let  $f$ be a  $K$-qc mapping of the plane such that   $f(0)=0$,
$f(\infty)=\infty$ and   $\alpha=K^{-1}$\,.   If $z_1,z_2\in
\mathbb{C}^*$, $|z_1|=|z_2| $ and $\theta \in [0,\pi]$
\rm{(}respectively $\theta^*\in [0,\pi]$\rm{)} is  the measure of
the convex angle between $z_1,z_2$ \rm{(}respectively
$f(z_1),f(z_2)$\rm{)}, then

$$ \theta^* \leq c \max \{\theta^{\alpha},
\theta\},
$$
where $c=c(K).$ In particular, if $\theta\leq 1$, then  $\theta^*
\leq c \theta^{\alpha}.$
\end{proposition}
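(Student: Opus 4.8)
The plan is to obtain the estimate as a direct corollary of the Gehring--Osgood inequality applied to the punctured plane, after rewriting both quasi-hyperbolic distances in the Martin--Osgood form recorded just above the statement. First I would observe that, since $f$ is $K$-qc on the plane with $f(0)=0$ and $f(\infty)=\infty$, its restriction is a $K$-qc homeomorphism of $\mathbb{C}^*=\mathbb{C}\setminus\{0\}$ onto itself. As $\mathbb{C}^*$ is a proper subdomain of $\mathbb{C}$, the quasi-hyperbolic metric $k=k_{\mathbb{C}^*}$ is available, and the Gehring--Osgood inequality \cite{GOs,Vu1} furnishes a constant $c=c(K)$ with
\[
k(f(z_1),f(z_2))\le c\,\max\{k(z_1,z_2)^{\alpha},\,k(z_1,z_2)\},\qquad \alpha=K^{-1},
\]
for all $z_1,z_2\in\mathbb{C}^*$; here the dimension is $n=2$, so the Gehring--Osgood exponent $K^{1/(1-n)}$ is exactly $\alpha$.

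Next I would rewrite the two sides using the Martin--Osgood formula. On the source side, writing $z_j=r_je^{it_j}$ with the branch chosen so that $|t_2-t_1|\le\pi$ is the convex angle $\theta$ between $z_1$ and $z_2$, one has $k(z_1,z_2)=\sqrt{|\ln(r_2/r_1)|^2+\theta^2}$, and the hypothesis $|z_1|=|z_2|$ kills the radial term, giving $k(z_1,z_2)=\theta$. On the target side, applying the same formula to $f(z_1)=R_1e^{is_1}$ and $f(z_2)=R_2e^{is_2}$, with $|s_2-s_1|\le\pi$ the convex angle $\theta^*$ between the images, gives $k(f(z_1),f(z_2))=\sqrt{|\ln(R_2/R_1)|^2+(\theta^*)^2}\ge\theta^*$. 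This is where an inequality, rather than an identity, enters: the images $f(z_1),f(z_2)$ need not share a modulus, so the radial term cannot be discarded on the source side but must be on the target side.

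Combining these three observations yields
\[
\theta^*\le k(f(z_1),f(z_2))\le c\,\max\{\theta^{\alpha},\theta\}.
\]
For the ``in particular'' clause, if $\theta\le 1$ then, since $0<\alpha<1$, we have $\theta\le\theta^{\alpha}$, hence $\max\{\theta^{\alpha},\theta\}=\theta^{\alpha}$ and $\theta^*\le c\,\theta^{\alpha}$, which is the asserted sharpened bound.

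The only genuinely delicate point I expect is the Martin--Osgood identification in the previous paragraph: one must be confident that the angular component of the quasi-hyperbolic distance on $\mathbb{C}^*$ — computed by lifting through the covering $\exp\colon\mathbb{C}\to\mathbb{C}^*$ and minimizing over the deck translations $2\pi i\mathbb{Z}$ — really is the convex angle in $[0,\pi]$ once the two points have equal modulus, so that no winding number contributes a spurious $2\pi$. Granting that (which the quoted formula already records), the rest is the black-box Gehring--Osgood estimate together with the trivial bound $\sqrt{a^2+b^2}\ge b$.
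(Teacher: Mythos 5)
Your proposal is correct and follows essentially the same route as the paper: apply the Gehring--Osgood inequality on $\mathbb{C}^*$, use the Martin--Osgood formula with $|z_1|=|z_2|$ to identify $k(z_1,z_2)=\theta$, and bound $\theta^*\leq k\bigl(f(z_1),f(z_2)\bigr)$ by discarding the radial term on the image side. Your write-up merely makes explicit the points the paper treats as clear (the restriction of $f$ to $\mathbb{C}^*$, the choice of branch giving the convex angle, and the case $\theta\leq 1$), so there is nothing to add.
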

\begin{proof}
By the  Gehring-Osgood inequality,

$$ k\Big(f(z_1),f(z_2)\Big) \leq c \max \{k(z_1,z_2)^{\alpha},
k(z_1,z_2)\},
$$  where $c=c(K).$
It is clear  that   $\theta^*\leq k\Big(f(z_1),f(z_2)\Big)$.   Since  $|z_1|=|z_2|$ and $k(z_1,z_2)=t_2 -t_1=\theta$, we get the desired result.
\end{proof}

\section{Main result}\label{sec2}
We first need some definitions.\\
Elementary  Lyapunov  domains, Arc-chord constant $b_\gamma$  and  the  second Lyapunov constant  $l^2_D= l^2_\gamma$.

\begin{definition}[Elementary   Lyapunov  domains]
(i)   Recall  for $r>0$ and $w \in \mathbb{C}$,  we denote  by   $B(w,r)$ and  the  $C(w,r)$  the disk and  circle of radius $r$ with center at $w$.
% and by  $C^+(r)$ the half circle in the upper half  plane.
In particular,  we use notation  $B(r)$ and  the  $C(r)$  for  the disk and  circle of radius $r$ with center at $0$ and we denote  by  $C^+(r)$ the half circle in the upper half  plane.

(ii)Definition of   $L_b^-(\varepsilon)$.  Further for   $v>0$   let  the circle $C(iv,r)$ touch the curve   $\gamma=\gamma(\mu,c)$  at points $w_1$ and $w_2$ (say that  $ u_1<u_2$)  and let $l^+$ be  the upper half arc   of the circle $C(iv,r)$
joining   $w_1$ and $w_2$   and  $\gamma_1$ be the part of $\gamma$ over
$[u_1,u_2]$,  where    $u_k= {\rm Re}  w_k$, $k=1,2$. Then the domain enclosed by     $l^+$  and  $\gamma_1$  we denote by ${\rm Lyp} (r,c)$. If $\epsilon^0 $  is maximum of $r>0$  for which
${\rm Lyp} (r,c)$  belongs to  $L(\varepsilon)$, we   denote  the domain   ${\rm Lyp} (\epsilon^0,c)$  by   ${\rm Lyp}^- (\varepsilon,c)$. If A  is an euclidean isometry  and $A(0)=b$, we denote the domain  $A({\rm Lyp}^- (\varepsilon,c))$  by $L_b^-(\varepsilon)={\rm Lyp}_b^- (\varepsilon,c)$  and call it an elementary  $\mu$- Lyapunov  domain.
\end{definition}
Although the boundary  of  an elementary  Lyapunov  domain consists  of an elementary  $\mu$- Lyapunov arc $\gamma_0$  and a circle arc $C_0$  with common end points, say  $a_0$ and $b_0$    note that  it   has no cusps  because  $\gamma_0$  and $C_0$  have common tangents  at  points $a_0$ and $b_0$.
 \begin{definition}[arc-chord condition]
 More generally, if we suppose only  that  the curve is rectifiable
 we can define  the distance along it. Let  $C$ be a rectifiable
 Jordan closed curve  and $z_1,z_2$ finite points of $C.$ They
 divide $C$ into two arc, and we consider one with smaller
 Euclidean length and denote its length with  $d_C(z_1,z_2).$
 \begin{itemize}
 \item[(a)]  The
 curve $C$  is said to satisfy the arc-chord condition if the ratio
 of this length to the distance $|z_1-z_2|$ is bounded by a fixed
 number $b_C=b_C^{arc}$ (which we call  arc-chord constant of $C$)   for all finite $z_1,z_2\in C.$%%%%%
\item[(b)]
 The curve $C$  is said to satisfy the arc-chord condition at a
 %The proof  of  the part  \rm{(IV)}  of  Theorem  \ref{thmain0},which is our main result,  is based on   that  a $C^1$ curve   satisfies  the arc-chord condition.
 fixed point $z_1\in C$  if the ratio of the  length $d_C(z_1,z)$
 to the distance $|z_1-z|$ is bounded by a fixed number   $b_C(z_1)=b_C^{arc}(z_1)$ for all finite $z\in C.$
 %We will prove We will prove   $b_C(z_1)=b_C^{arc}(z_1)$ for all finite $z\in C.$
 \end{itemize}

\begin{itemize}
\item[(c)]
If  $D$ is  a  $\mu$-Lyapunov  domain bounded by a curve $\gamma$, we define     $l_2=l_2(D)=l^2_D= l^2_\gamma=\frac{\pi}{2}\,l_1\, b_\gamma^{1+\mu}$, and we call it the  second Lyp-constant,  where  $l_1=lyp(\gamma,\mu)$.
\end{itemize}
\end{definition}

 %Arc-chord constant $b_\gamma$  and  second Lyapunov constant  $C^2_D= C^2_\gamma$
 \subsection{Auxiliary  results}\label{ssAux}
Suppose that $D$ satisfies the hypothesis  (Lyp-0).
Further, one can prove
%without  loss  of  generality  we can suppose that
\begin{itemize}
\item[(c1)] It is known that  that  a $C^1$ curve   satisfies  the arc-chord condition.
\item[(d)] there is  $r_1>0$ such $\partial D \cap B(r_1)$ is graph of a  function $F$,   $v=F(u)$,  $-u_1 <u < u_2$, where  $u_1,u_2>0$, and that
  the set  $V=\{ (u,v):-u_1 <u < u_2, F(u)<v \}\cap B(r_1)$ belongs $D$, where $u,v$  are the cartesian coordinates in w-plane  and $w=u+iv$.
%$\epsilon$ such that  on the circle  $C(\epsilon)$
 \item[(e)] Let $D$ be  a  bounded   Lyapunov  domain,    $a_0\in D$   and  let $\psi $  be a conformal mapping of $D$ onto $\mathbb{U}$  with  $\psi(a_0)=0$. Then
there are constants $k_1=\underline{k}_1(D,a_0)$ and $k_2=\underline{k}_2(D,a_0)$ (which we  call   the lower and upper   Kellogg multiplicative constants
of $D$  with respect to $a_0$  respectively)  such that    $k_1 |z_1-z_2|\leq |\psi (z_1)-\psi (z_2)| \leq k_2 |z_1-z_2|$,  $z_1,z_2\in D$.
\item[(f)]  Mori's theorem.   Let  $f:\mathbb{U}\rightarrow  \mathbb{U}$  be a surjective K-qc  mapping with $f(0)=0$  and  $\alpha=1/K$. Then
$|f(z_1)- f(z_2)|  \leq 16 |z_1- z_2|^{\alpha}$, $z_1,z_2\in \mathbb{U}$,
i.e.  $f$ is $\alpha$-Holder continuous.

\item[(g)] Let the mapping $A$ is given  by $A(z)= i \frac{z-i}{z+i} + i$.  Then  $A(i)=i$  and  $A$  maps  $\mathbb{H}$  onto $B_1=B(i,1)$.
Since  $A'(z)= \frac{2}{|z+i|^2}$, we first find   $|A'(z)|\leq 2$  and therefore     $|A(z)|\leq 2|z|$,  $z\in \mathbb{H}$.

%??  $l_0= 16 2^{\alpha} k_1^{-1}$
%$B(z)=  i \frac{1-z}{1+z}$
%Set  $\breve{h}= h\circ B$  apply   proposition \ref{prop1} to  $\breve{h}$ there is  a disk  $B(0,r_0)$ such  that  for $h$
\item[(h)]  Suppose that  $D$ is a bounded convex  planar domain,     $f: \overline{D} \rightarrow \mathbb{C}$ is holomorphic  mapping  and $z_0\in D$. Then there is a constant
%o  $\overline(D)$
$c>0$   such that  $|f z-f z_0| \leq c |z-z_0|$, $z\in D$. The proof is straightforward.
\item[(i)]  Let  $f$ be a  $K$-qc mapping of the half -plane $\mathbb{H}$  on a domain $D$ such that   $f(0)=0$, and suppose that $\partial D$ is  a  $K$-quasi-circle
%$f(\infty)=\infty$
and   $\alpha=K^{-1}$.  Then  $f$  has a $K_1$-qc extension to a map $\tilde{f}$    of the complex
plane,  which by abuse of notation we denote sometimes  again by $f$  if there is no possibility of confusion.
\end{itemize}
\begin{definition}\hfill
\begin{itemize}
\item[(i)] If $z_1,z_2\in \mathbb{C}^*$   by   $\theta(z_1,z_2)$ we denote   the measure of
the convex angle between $z_1,z_2$.
\item[(ii)]  For $p\in \mathbb{C}$,  set
$$X(z)=X_{p}(z)=\frac{p z}{z- p}, z \in \overline{\mathbb{C}}\quad  \mbox{and}\quad  Y=Y_p= X^{-1}.$$
\item[(iii)] If $f$ is homeomorphism of $\overline{\mathbb{C}}$  onto itself, we define $p=p(f)= f^{-1}(\infty)$.
 \item[(iv)]  If $\gamma$ is an arc in  $\mathbb{C}$   and  $Z:\gamma\rightarrow  \mathbb{C}^*$ continuous map  by   $\Delta_\gamma {\rm Arg} Z$  we denote the variation  of
${\rm Arg} Z$   along $\gamma$.
\end{itemize}
\end{definition}
Note that $X$ and $Y$ are M\"{o}bius automorphisms of $\overline{\mathbb{C}}$ with the following properties: $Y(z)=-\frac{p z}{z- p}$, $X(0)=Y(0)=0$, $X(p)=\infty$,
$X(\infty)=p$,     $Y(p)=\infty$   and   $Y(\infty)=-p$.
If we set  $\breve{f}= f\circ X$, then  $f=\breve{f}\circ Y$.
   $X_p$  and  $Y_p$ map lines   $l_\beta =\{ r e^{i \beta}: r\in\mathbb{R} \}$ onto the circles which contain $0$ and $p$.
Since $Y_p$ map   the circle  $C(0,|p|)$  onto line $L$ which does not contain $0$.
%If  $p=|p|e^{i \varphi_0}$,
If  $z_n= e^{-i/n}p$   and  $z_n'= e^{i/n}p$, then
$\theta(z_n',z_n)\rightarrow 0$  and  $\theta(Xz_n',Xz_n)\rightarrow \theta_0$, $\theta_0\neq 0$, if $n\rightarrow \infty$.
This example shows  that  we need to adapt  a version of  Proposition  \ref{prop1}   to hold for the mappings  $X_p$.
%\ref{prop1} Proposition  2.8
\begin{proposition}\label{prop0}
Let  $f$ be a  $K$-qc mapping of the plane $\overline{\mathbb{C}}$  onto  itself,   $f(0)=0$,  $p=f^{-1}(\infty)$,
%a domain $D$ such that   $f(0)=0$,
%$bD$ quasi-circle
%$f(\infty)=\infty$  \cap B(0,r_0)
$\alpha=K^{-1}$\,  and  $r_0=|p|/2$.
\begin{itemize}
\item[(I)] {\rm (a)} Then    $f= \breve{f} \circ  Y$,  where  $Y=Y_{p}$, $\breve{f}$  is $K$-qc mapping of the plane $\overline{\mathbb{C}}$  onto  itself,  with   $\breve{f}(0)=0$ and      $\breve{f}(\infty)=\infty$.\\
%\item[(a)]
{\rm(b)} If $z_1,z_2\in
\mathbb{C}^*$, $|z_1|=|z_2|$ and $\theta \in [0,\pi]$
\rm{(}respectively $\theta^*\in [0,\pi]$\rm{)} is  the measure of
the convex angle between $z_1,z_2$ \rm{(}respectively
$\breve{f}(z_1),\breve{f}(z_2)$\rm{)}, then $ \theta^* \leq c \max \{\theta^{\alpha},
\theta\},$
where $c=c(K).$ In particular, if $\theta\leq 1$, then  $\theta^*
\leq c \theta^{\alpha}.$
\item[(II)]  If $z_1,z_2\in \mathbb{C}^*\cap B(0,r_0)$, $|z_1|=|z_2|$,  then
   $$\theta(Xz_1,Xz_2)\leq (1+ r_0^{-1}) \theta(z_1,z_2).$$
%In particular,  if $z\in \mathbb{C}^*\cap B(0,r_0)$, $r=|z|$,
\item[(III)]  For given  $H_0'={\rm Lyp}(\varepsilon,c,\mu)$,  $\varepsilon< r_0$, there is $H_0={\rm Lyp}(\varepsilon_1,c_1,\mu_1)$   such   that  $Y(H_0) \subset H_0'$.

%%%%%%%%%%%%%%%%%%%%%%%%%%%%%%%%%%%%%%%%%%%%%%%%%%%%%%%%%%%%%%%%%%%%%%%
%%%%%%%%%%%%%%%%%%%%%%%%%%%%%%%%%%%%%%%%%%%%%%%%%%%%%%%%%%%%%%%%%%%%%%%%%%

\end{itemize}
\end{proposition}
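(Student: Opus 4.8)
For part (I)(a) I would simply set $\breve f:=f\circ X$. Since $X$ is a M\"obius transformation, hence conformal ($1$-qc), $\breve f$ is again a $K$-qc automorphism of $\overline{\mathbb C}$; by the listed properties $X(0)=0$, $X(\infty)=p$ together with $p=f^{-1}(\infty)$ we get $\breve f(0)=f(0)=0$ and $\breve f(\infty)=f(p)=\infty$, and $\breve f\circ Y=f\circ X\circ Y=f$ because $Y=X^{-1}$. Part (I)(b) is then immediate: $\breve f$ is a $K$-qc self-map of the plane fixing $0$ and $\infty$, so Proposition \ref{prop1} applies verbatim with $\breve f$ in place of $f$.

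For part (II) I would estimate the argument distortion of $X(z)=\tfrac{pz}{z-p}$ directly on $B(0,r_0)$. Writing $z_j=\rho e^{it_j}$ with $\rho=|z_1|=|z_2|<r_0=|p|/2$ and choosing the short arc so that $|t_1-t_2|=\theta(z_1,z_2)\le\pi$, the identity $\arg X(z)=\arg p+\arg z-\arg(z-p)$ gives
\[
\arg X(z_1)-\arg X(z_2)=(t_1-t_2)-\bigl(\arg(z_1-p)-\arg(z_2-p)\bigr).
\]
Parametrising $z=\rho e^{it}$ and using $\bigl|\tfrac{d}{dt}\arg(\rho e^{it}-p)\bigr|=\tfrac{\rho}{|\rho e^{it}-p|}\le\tfrac{\rho}{|p|-\rho}$ yields $\bigl|\arg(z_1-p)-\arg(z_2-p)\bigr|\le\tfrac{\rho}{|p|-\rho}\theta(z_1,z_2)$, hence $\bigl|\arg X(z_1)-\arg X(z_2)\bigr|\le\bigl(1+\tfrac{\rho}{|p|-\rho}\bigr)\theta(z_1,z_2)$; a short case distinction using $\theta(Xz_1,Xz_2)\le\pi$ shows the convex angle $\theta(Xz_1,Xz_2)$ satisfies the same bound, and since $\rho<r_0$ this is the estimate claimed in (II).

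For part (III) the key observation is that $Y$ is M\"obius with $Y(0)=0$, pole at $p$, and $Y'(0)=1$, so on $B(0,r_0)$ it is a small perturbation of the identity: with $Y(w)=w\,g(w)$, $g(w)=(1-w/p)^{-1}$, one has $|g(w)-1|\le\tfrac{2|w|}{|p|}$ for $|w|\le r_0$, whence $\bigl|\,|Y(w)|-|w|\,\bigr|\le C_0|w|^2$ and $\bigl|\arg Y(w)-\arg w\bigr|=|\arg g(w)|\le C_0|w|$, with $C_0$ depending only on $|p|=2r_0$. Given $H_0'={\rm Lyp}(\varepsilon,c,\mu)$, I would take $\mu_1:=\mu$ and test $w\in H_0={\rm Lyp}(\varepsilon_1,c_1,\mu_1)$: then $\arg Y(w)\ge\arg w-C_0|w|>c_1|w|^\mu-C_0|w|$, while $c\,|Y(w)|^\mu\le c|w|^\mu(1+C_0|w|)^\mu\le c|w|^\mu+cC_0|w|$ (using $(1+x)^\mu\le 1+x$ and $|w|\le1$), so $\arg Y(w)>c\,|Y(w)|^\mu$ provided $(c_1-c)|w|^\mu\ge(1+c)C_0|w|$, i.e.\ $c_1-c\ge(1+c)C_0\,\varepsilon_1^{1-\mu}$; this holds once $\varepsilon_1$ is small, the exponent $1-\mu>0$ being exactly what makes the argument close. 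The symmetric bound $\arg Y(w)<\pi-c\,|Y(w)|^\mu$ is obtained the same way, $|Y(w)|\le|w|(1+C_0|w|)<\varepsilon$ holds for $\varepsilon_1$ small, and shrinking $\varepsilon_1$ once more forces $c_1\varepsilon_1^{\mu_1}<\pi/2$, so that $H_0$ is an admissible elementary Lyapunov domain with $Y(H_0)\subset H_0'$.

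The routine content is all of part (I); the substantive step is part (III), where one must carefully track how the cusp parameters $(\varepsilon,c,\mu)$ of the prescribed $H_0'$ dictate the choice $(\varepsilon_1,c_1,\mu_1)$ of $H_0$. What makes it work is structural: the cusp-width $c|w|^\mu$ of a Lyapunov domain ($0<\mu<1$) is of strictly lower order than the $O(|w|)$ angular and $O(|w|^2)$ radial distortion produced by the M\"obius map $Y$ near $0$, so $H_0$ can always be chosen thin enough (large $c_1$, or alternatively $\mu_1<\mu$) and short enough that $Y(H_0)$ still lies inside $H_0'$.
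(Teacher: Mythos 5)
Your parts (I) and (II) follow essentially the same route as the paper: (I) is the identical factorization $\breve f=f\circ X$ followed by Proposition \ref{prop1}, and (II) is the same variation-of-argument estimate along the circular arc joining $z_1,z_2$, based on $\arg X=\arg p+\arg z-\arg(z-p)$. Your write-up of (II) is in fact the more careful one: the correct derivative bound is $\bigl|\frac{d}{dt}\arg(\rho e^{it}-p)\bigr|\le \rho/(|p|-\rho)$ (the paper drops the factor $\rho$), so what you actually establish is $\theta(Xz_1,Xz_2)\le\bigl(1+\frac{\rho}{|p|-\rho}\bigr)\theta\le 2\theta$; this implies the stated constant $1+r_0^{-1}$ only when $r_0\le 1$, so your closing claim that it ``is the estimate claimed in (II)'' is a slight gloss for $r_0>1$ (where the stated constant is anyway too strong, since the local stretch of $\arg X$ near $|z|=r_0$ approaches $2$) -- harmless downstream, because the later applications only need a bound $c(r_0)\,\theta$ and use $r_0=1$. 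Where you genuinely diverge is (III): the paper pulls the boundary curve of $H_0'$ back by $X$, uses the fact that $X$ sends the real axis to a circle through $0$ and $p$, invokes part (II), and runs a case analysis on the position of $p$ to show the pulled-back curve lies below a curve $\gamma(c_1,\mu)$; you instead write $Y(w)=w\,g(w)$ with $|g(w)-1|\le 2|w|/|p|$ on $B(0,r_0)$ and verify directly that $Y(w)\in H_0'$ for $w\in H_0$, exploiting that the cusp width $c|w|^\mu$ ($\mu<1$) dominates the $O(|w|)$ angular and $O(|w|^2)$ radial distortion of $Y$ near $0$. Your argument is more elementary and self-contained (no appeal to (II), no case distinction), yields explicit admissible parameters with $\mu_1=\mu$ as in the paper, and makes the mechanism transparent; the paper's sketch, on the other hand, runs parallel to the angle-comparison technique it reuses for genuinely quasiconformal maps in Theorem \ref{thmain0}(IV), which is its main advantage there.
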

\begin{proof}(I) Set  $\breve{f}= f\circ X$. Since  $X(p)=\infty$  and $p=f^{-1}(\infty)$, we  have  $\breve{f}(\infty)=\infty$.
Since $X$  is   M\"{o}bius automorphism of $\overline{\mathbb{C}}$,  $\breve{f}$ is $K$-qc.
By  (a) and an application of   Proposition \ref{prop1} to  $\breve{f}$,    (b)  follows.
%definition
%?? The map $z=R(\zeta) = (\zeta- 1/b_0)^{-1} +b_0$   maps    $\overline{\mathbb{C}}$  onto  itself and  $R(\infty)=b_0$, $R(0)=0$.
%Set  $\breve{h}= h\circ R$ and   apply   Proposition \ref{prop1} to  $\breve{h}$.
%If $L=R^{-1}$,  then  $L'(0)=- z_0^{-2}$,  $L(z)=z (b_0  (z-b_0))^{-1}$,  $\arg L= \arg z - \arg T -\arg b_0 $, where   $T=z-b_0$.  ??%

(II) Let   $|z_1|=|z_2|=R<r_0$.  If necessary we can re-numerate points such that  $z_k= R e^{it_k}$, $k=1,2$,  $t_1\leq t_2 \leq t_1 +\pi$
and  $l=l(z_1,z_2)$  be the circular arc defined by  $l(t)= R e^{it}$, $t_1\leq t \leq t_2 $.
%By  $\Delta_l Arg T$ and  $\Delta_l Arg X$  we denote the variation  of $Arg T$ (respectively $Arg X$)  along $l(z_1,z_2)$.
We are going to estimate   the variation  $\Delta_l {\rm  Arg} T$  and  $\Delta_l {\rm  Arg}\, X$.
Since  $\displaystyle{X(z)=X_{p}(z)=\frac{p z}{z- p}}$,
we can write
\begin{itemize}
\item[(i)]   $\arg X= \arg z - \arg T +\arg p $, where   $T=z-p$.\\
Hence
 \item[(ii)]  $\Delta_l {\rm Arg} X \leq \Delta_l {\rm Arg}\, Id + \Delta_l {\rm Arg}\, T $, where $Id$ is the identity map.
\end{itemize}
Since  $T'/T= 1/T$, for      $z=r e^{it}$,
$$ (\arg T)_t= {\rm Im} \left(\frac{T'}{T}i r e^{it}\right).$$
For $z\in B(0,r_0)$,  we  have    $|(\arg T)_t|\leq  \frac{1}{|z-p|} \leq 1/r_0$, and therefore
%the variation  $\Delta_l Arg T $ of $Arg T$ along $l(z_1,z_2)$,
%|\arg T (z_2)-\arg T (z_1)|\leq  |\arg X (z_2)-\arg X (z_1)|\leq
$$\theta(Tz_1,Tz_2)\leq \Delta_l {\rm Arg} T\leq  r_0^{-1}|t_2-t_1|.$$
 Hence, by the  item (ii), for  $|z_1|=|z_2|=R<r_0$,
$$\theta(Xz_1,Xz_2)\leq \Delta_l {\rm Arg} X \leq (1+ r_0^{-1})|t_2-t_1|.$$

%??   Since  $h= \breve{h} \circ  L$,  this  with  an application of    Proposition \ref{prop1} to  $\breve{h}$ yield result.\\

(III)  We only outline a proof.   Set $\zeta=Y(z)$, $\theta=\arg z$,   $\varphi  =\arg \zeta$,  $z=x+iy=re^{i\theta}$,  $\zeta= \xi+i\eta   =\rho e^{i\varphi}$.
So we define the  functions  $\theta=\theta(\rho,\varphi)$, $\varphi=\varphi(r, \theta)$, $\rho= \rho(r, \theta)$  and  $r=r(\rho,\varphi)$.

Since $Y$ is conformal mapping on  $\overline{B(0,r_0)}$  and $Y(0)=0$, by
%?? Kellogg theorem,
the item (h) in subsection  \ref{ssAux},   we find   $\rho(r, \theta)\approx r$  and   $r=r(\rho,\varphi)\approx \rho$.

Let   $\zeta \in \gamma:=\gamma(\varepsilon,c,\mu) $,   $\rho =|\zeta|$  and $z'=X(\rho)=r(\rho,0)e^{i\theta'}$, where  $\theta'=\theta(\rho,0)$.

Case 1.  Suppose that  $p=p_1+ip_2$, $p_2>0$.

Since $X(\infty)=p$, $X$ maps the coordinate axis  $\eta=0$ (in  the $\zeta$ -plane)  onto the circle  $K=C(iR_0,R_0)$   which contains $p$, where  $R_0=R_0(p)=\frac{|p|^2}{2 |p_2|}$ depends only on $p$. $Y_p$  maps  $B=B(iR_0,R_0)$  onto $\mathbb{H}$.
Let $K'$ be semi circle  $y= R_0 - \sqrt{R_0^2-x^2}$. Then  $\theta'\approx x(\rho)\preceq r(\rho)\approx r$.
By the part (II) of the  Proposition, $\theta(z, z')\preceq \varphi\preceq \rho^\mu \preceq r^\mu $. Hence, since   $\theta \leq \theta(z, z') +  \theta'$,  we find
$\theta \preceq r^\mu$ (thus we can choose  $\mu_1=\mu$).

In a similar way we consider:

Case 2.  Suppose that  $p=p_0=p_1+ip_2$, $p_2<0$.  In this case  $Y_p$  maps  $B=B(-iR_0,R_0)$  onto $\mathbb{H}^*$.

Case 3. $p_0 \in\mathbb{R}$.  In this case   $Y_p$  maps  $\mathbb{H}$ onto itself.
\end{proof}
%%%%%%%%%%%%%%%%%%%%%%%%%%%%%%%%%%%%%%%%%%%%%%

\begin{theorem}\label{thmain0}
%(H-0)  {\rm (}$\mathbb{H}^0_{qc}${\rm )}
%Suppose
\begin{enumerate}
\hfill
%\item [\rm{(i)}] Suppose that $D$ satisfies the hypothesis  {\rm (Lyp-0)} \rm{(}see Definition  \ref{1.3}\rm{)} and   $h$ is a $K$-qc mapping of $\mathbb{H}$ onto $D$ and %$h(0)=0$.\\
%$h:\mathbb{H}\rightarrow D$ is $K$-qc map, where $\mathbb{H}$ is
%the upper half-plane and suppose $D$ is a Jordan domain with
%$C^{1,\mu}$ boundary $\partial D$  positively oriented, and  suppose %that $0 \in\partial D$, $h(0)=0$, and that $D$ has real axis as %tangent at $0$, with inner normal pointing upwards.
%Then  by the item (i) from  subsection \ref{ssAux}, we have:

\item [\rm{(I)}]   Suppose  \rm{(i)}:   $h$ is a $K$-qc map from  $\mathbb{H}$ onto a Lyapunov domain $D$. Then    $h$  has a $K_1$-qc extension to a map $\tilde{h}$    of the complex
plane.
%which by abuse of notation we denote again by $h$  if there is no possibility of confusion.\\
%,    $b_0=h^{-1}(\infty)\in \mathbb{C}$.
\item [\rm{(II)}] If   $h$ satisfies  the hypothesis  $\mathbb{H}_{qc}^0$, then there is a constant $l_0= 16\, 2^\alpha k_1^{-1} $ which depends on  $K_1$  and the Kellogg multiplicative constant  of $D$  \rm{(}with respect to $a_0=h(i)$\rm{)}  $k_1=\underline{k}_1(D,a_0)$, such that
\item[\rm{(ii):}]  $|h(z)|\leq l_0 |z|^{1/K_1}$ if  $z\in \mathbb{H}$  and $|z| \leq 1$.
% ?? Suppose that
%In addition, ?? wlg  suppose that $D$ satisfies the hypothesis ?? (d) in subsection \ref{ssAux}.
\item [\rm{(III)}] If  $D$ satisfies the hypothesis  {\rm (Lyp-0)},  then  there are constants $\varepsilon >0$ and $c>0$  such that
for $|w|<\varepsilon$ (here $c\varepsilon^\mu<\pi$)
and $w\in \partial D$, either $|arg(w)|<c|w|^\mu$
or $|\pi-arg(w)|<c|w|^\mu$ where $arg$ is the branch of the argument
determined by  $-\pi/2<arg(w)<3\pi/2$ and moreover that set
$$D_0=D_0(\varepsilon)={\rm Lyp}(\varepsilon,c,\mu)=\{w : c |w|^\mu<arg(w)<\pi-c|w|^\mu\,, |w|<\varepsilon\}$$
satisfies $D_0\subset D$, where  $c$ depends only on the Lyapunov  multiplicative constant of $\partial D$.

\item[\rm{(iii):}]  We can choose  $c=l_2= l^2_D= \frac{\pi}{2}\,l_1\, b_\gamma^{1+\mu} $, where  $l_1=lyp(\gamma)$  and  $b_\gamma=b_\gamma^{arc}$ is the arc-chord constant of  $\gamma$.
%Suppose also that $A, 1/A$ are H\"{o}lder continuity constants for
%$K_1$ for $h$ .
\item [\rm{(IV)}] Then there is a constant $c_1=c_1(\mu,
\varepsilon, c, K_1, l_2, |p|)$ such that the region
$$H_0= H_0 (\varepsilon) =\{z: c_1
|z|^{\mu/K_1^2}<arg(z)<\pi-c_1|z|^{\mu/K_1^2}, |z|<(\varepsilon/l_0)^{K_1}$$
satisfies
\item [(a)]  $h(H_0)\subset D_0$  and
\item [(b)]   there are  constants  $\varepsilon_2,c_2,\mu_2$  such that  $D'_0\subset h(H_0)$, where  $D'_0 ={\rm Lyp}(\varepsilon_2,c_2,\mu_2)$.
\end{enumerate}
Note that $H_0= H_0 (\varepsilon)={\rm Lyp}(\varepsilon_1,c_1,\mu_1)$,    where    $\mu_1=\mu/K_1^2$ and   $\varepsilon_1= (\varepsilon/l_0)^{K_1}$.
\end{theorem}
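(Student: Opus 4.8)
The plan is to assemble the four parts in the order given, since each feeds into the next. For part (I) I would simply invoke item (i) of subsection \ref{ssAux}: a Lyapunov domain has a $C^{1,\mu}$, hence rectifiable, Jordan boundary, which is in particular a $K$-quasicircle; by the standard reflection/extension theorem for qc maps of a half-plane onto a quasidisc, $h$ extends to a $K_1$-qc homeomorphism $\tilde h$ of $\overline{\mathbb{C}}$, with $K_1$ depending only on $K$ and on the quasicircle constant of $\partial D$. For part (II), apply Mori's theorem (item (f)) to the composition $f = \psi\circ \tilde h \circ A$, where $A$ maps $\mathbb{U}$ to $\mathbb{H}$ and $\psi$ is the Kellogg conformal map of $D$ onto $\mathbb{U}$ with $\psi(a_0)=0$; then $f$ is $K_1$-qc of $\mathbb{U}$ onto itself fixing $0$, so $|f(z_1)-f(z_2)|\le 16|z_1-z_2|^{1/K_1}$. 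Undoing $\psi$ costs a factor $k_1^{-1}$ (item (e)), undoing $A$ on the disk of radius $1$ around $i$ costs at most $2^\alpha$ (item (g) gives $|A(z)|\le 2|z|$ type control), which yields $|h(z)|\le l_0|z|^{1/K_1}$ with $l_0 = 16\cdot 2^\alpha k_1^{-1}$ as claimed.

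Part (III) is where the Lyapunov hypothesis on $D$ is cashed in geometrically. Under (Lyp-0) the boundary near $0$ is the graph $v = F(u)$ of a $C^{1,\mu}$ function with $F(0)=F'(0)=0$ (item (d)); Taylor's theorem with the Lyapunov bound $|F'(u)-F'(0)|\le l_1|u|^\mu$ gives $|F(u)|\le \frac{l_1}{1+\mu}|u|^{1+\mu}$, which is exactly the estimate in Example \ref{ex1} showing the boundary lies below a curve $\gamma(c,\mu)$. Converting to polar coordinates as in that Example, a point $w\in\partial D$ with $|w|$ small satisfies either $|\arg w|< c|w|^\mu$ or $|\pi - \arg w| < c|w|^\mu$; the arc-chord constant $b_\gamma$ enters when one passes from "graph parameter $u$" to "modulus $|w|$", producing the stated value $c = l_2 = \frac{\pi}{2}l_1 b_\gamma^{1+\mu}$. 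That $D_0(\varepsilon)\subset D$ for small $\varepsilon$ follows from item (d) (the region above the graph inside a small ball lies in $D$) together with the fact that $D_0$ lies strictly above $\gamma(c,\mu)$ by construction.

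For part (IV), combine (II) and (III) with the angular distortion estimate. First, (II) gives $h(H_0)\subset B(0,\varepsilon)$ when $|z| < (\varepsilon/l_0)^{K_1}$, so it suffices to control $\arg h(z)$. Apply Proposition \ref{prop0} to $\tilde h$: write $\tilde h = \breve h\circ Y_p$ with $p = \tilde h^{-1}(\infty)$; part (II) of that Proposition controls the angle distortion under $Y_p$ on $B(0,r_0)$, and part (I)(b) (i.e.\ Proposition \ref{prop1}) controls it under the plane-qc map $\breve h$ fixing $0,\infty$: an angle $\theta\le 1$ at $0$ maps to an angle $\preceq \theta^\alpha$. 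Chaining these, a boundary point of $H_0$ at angle $\asymp c_1|z|^{\mu/K_1^2}$ from the real axis has image at angle $\asymp |w|^\mu$ from the tangent line of $\partial D$ at $0$ — but one must be careful that the relevant exponent is $\mu/K_1^2$ and not $\mu/K_1$, because the extension $\tilde h$ is only $K_1$-qc and one applies an angle-Hölder estimate with exponent $1/K_1$ on top of the $\mu$-Lyapunov exponent, and then once more when inverting. This places $\partial H_0$ below $\gamma(c,\mu)$ near $0$, i.e.\ $h(H_0)\subset D_0$, giving (a). For (b) one runs the same argument with $h^{-1}$ (also $K_1$-qc after extension) in place of $h$: the image of a Lyapunov sub-region $D_0'$ under $h^{-1}$ lies inside $H_0$, equivalently $D_0'\subset h(H_0)$, with the exponent $\mu_2$ again degraded by the qc constant. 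The main obstacle, and the point requiring genuine care rather than bookkeeping, is the two-sided tracking of the exponents through the decomposition $\tilde h = \breve h\circ Y_p$ and through the inverse map: one must verify that Proposition \ref{prop0}(II)–(III) are applicable on a ball of the relevant radius $r_0 = |p|/2$ (which forces the constant $c_1$ to depend on $|p|$, as stated), and that the "$\max\{\theta^\alpha,\theta\}$" in Proposition \ref{prop1} really does reduce to the favorable branch $\theta^\alpha$ because all angles involved are $\le 1$ once $\varepsilon$ is chosen small enough.
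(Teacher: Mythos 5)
Your overall route is the paper's route: extension via the quasicircle property for (I); conjugation by conformal maps plus Mori's theorem and the Kellogg bounds for (II), with the same constant $16\cdot 2^{\alpha}k_1^{-1}$; the Lyapunov graph/arc-length estimate for (III); and for (IV) the decomposition $h=\breve{h}\circ Y_p$ from Proposition \ref{prop0}, the Gehring--Osgood angle estimate of Proposition \ref{prop1}, the radial H\"older bound from (II), and the observation that (b) follows by running the argument for the qc map $h^{-1}$. So this is not a different method; the only issues are in how you chain the estimates in (IV)(a).

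There, as written, your step would fail: you apply the angle estimate in the forward direction ("a boundary point of $H_0$ at angle $\asymp c_1|z|^{\mu/K_1^2}$ has image at angle $\asymp |w|^{\mu}$"), but Proposition \ref{prop1} only gives an \emph{upper} bound $\theta^*\preceq\theta^{\alpha}$ on the image angle, and an upper bound on $\arg h(z)$ can never show that $h(z)$ lies \emph{above} the curve $\gamma(c,\mu)$, i.e. inside $D_0$. What is needed (and what the paper does) is the pullback: take $w\in\gamma_0=\partial D_0$ and a point $w'\in\partial D$ with $|w'|=|w|=R$, so $\theta(w,w')\preceq R^{\mu}$ (the equal-modulus hypothesis of Proposition \ref{prop1} is satisfied on the $w$-side), apply the proposition to $\breve{h}^{-1}$ to get $\theta(\zeta,\zeta')\preceq R^{\mu\alpha}$ with $\zeta'$ real, and then use the radial bound $R\preceq \rho^{\alpha}$ of (II) to conclude $\arg\zeta\preceq\rho^{\mu\alpha^2}$; thus $\breve{h}^{-1}(\gamma_0)$ lies below $\gamma(\kappa_3,\mu/K_1^2)$, hence is disjoint from $H_0$, and together with $h(B(\varepsilon_1))\subset B(\varepsilon)$ this yields $h(H_0)\subset D_0$ (first for $\breve{h}$, then for $h$ via $Y(H_0)\subset H_0'$ from Proposition \ref{prop0}(III)). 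Note also that the second factor $1/K_1$ does not come from "inverting" as you say, but from this radial H\"older step; and in (III) the arc-chord constant enters through $s\le b_\gamma R$ in the arc-length parametrization (with the graph parametrization the passage from $u$ to $|w|$ is trivial, so your derivation would not produce the stated constant $l_2=\frac{\pi}{2}l_1 b_\gamma^{1+\mu}$, though any admissible $c$ suffices for the containment claim).
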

%%%%%%%%%%%%%%%%%%%%%%%%%%%%%%%%%%%%%%
 Recall that the hypothesis  \rm{(i)} (together with some technical  requirements  $h(0)=0$ and that $D$ satisfies (Sp0))  in the theorem   is essentially  equivalent to the hypothesis  ($\mathbb{H}_{qc}^0$).  From the proof below  it is clear  that the hypothesis  \rm{(i)} implies
\begin{itemize}
  \item[\rm{(i1):}] $h$ is a qc mapping of $\mathbb{H}$ onto the quasidisk  $D$  (which is  much weaker  then  \rm{(i)}),
 and that  the statement   \rm{(I)}     holds   under the hypothesis  \rm{(i1)}.
\end{itemize}
If  in addition to  (i1),  $h(0)=0$  and $0\in \partial D$,
we leave to the interested  reader to  state and prove  a  corresponding version of the statement  \rm{(II).
% Note  if only
Since $h^{-1}$  is also qc  the proof  of  \rm{(IV)}  of  Theorem  \ref{thmain0}  shows  that the following holds
\begin{itemize}
  \item[$\rm{(IV')}$:] for each   special  domain of Lyapunov type $X_0$ with vertex  at  $0$, there is a special  domain of Lyapunov type $Y_0$ with vertex  at  $0$     such that
 $Y_0\subset h(X_0)$.  In particular, we can choose $X_0=H_0 $   and  $Y_0$ to be   an elementary  Lypunov domain  $D_0^- $      such that $D_0^- \subset h(H_0)$.
\end{itemize}
%%%%%%%D_b
%Figure  1.

\begin{figure}[h!]
\begin{center}
\includegraphics[width=\linewidth]{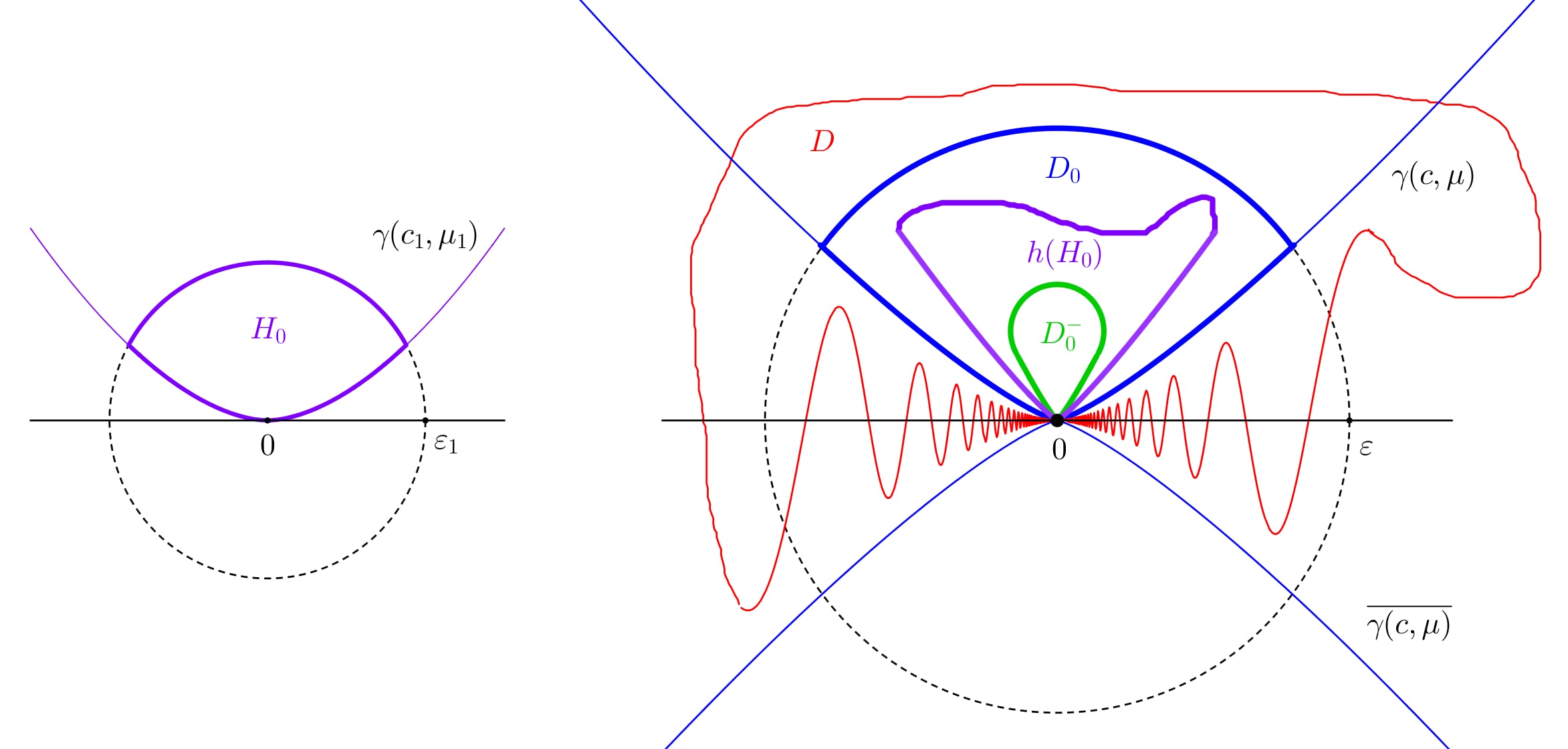}
\caption{}\label{fig:1}
\end{center}
\end{figure}

On the figure \ref{fig:1} the domains $D$, $D_0$, $D_0^-$,  $H_0$  and  $h(H_0)$  are enclosed by  lines   whose  colors  are  red, blue, green,  violet (on the left)  and  violet (on the right) respectively.
%%%%%%%

\begin{proof}
%Concerning the statement (II),  note that
%$l_0>1$ in general and      in the case that the codomain is  the  unit disc $D=\mathbb{U}$ with  $h(0)=0$ we can directly  apply Mori's theorem, $l_0$  is  $16$ .

\rm{Proof of (I)}.    Since  $D$ is  $C^{1,\mu}$,  $D$  is a
quasi-circle  and therefore by the item (i) from  subsection \ref{ssAux},    the statement  $\rm{(I)}$   follows.

\rm{Proof of (II)}.  We suppose that  $a_0=h(i)$ is given. Let  $B_1=B(i;1)$  and  $R_1$ a conformal mapping of $B_1$ onto   $\mathbb{H}$ such that $R_1$ fixes $0$ and $i$   and $R_2$ a conformal mapping of $D$  onto  $B_1$  such that $ R_2(0)=0$  and   $R_2(a_0)=i$.
Set  $\underline{h}= R_2\circ h \circ R_1$. Then  $\underline{h}(0)=0$, $\underline{h}(i)=i$, $\underline{h}$ maps   $B_1$ onto itself,   $h = R_2^{-1} \circ \underline{h} \circ R_1^{-1}$.   Set   $w=h(z), \zeta= R_1^{-1}(z), \zeta'= R_2^{-1}(w)$   and  $\zeta'= \underline{h}(\zeta)$.
Note that   $R_1^{-1}(\mathbb{H})$  is a disk.
By the item  (g) in   subsection  \ref{ssAux},  we find   $|R_1^{-1}(z)|\leq 2 |z|$, $z\in \mathbb{H}$.
Since       $D$ is   a  Lyapunov  domain bounded,
  $|R_2^{-1}(\zeta')|\leq k_3 |\zeta'|$, $\zeta'\in B_1$, where  $k_3= \underline{k}_1(D,a_0)^{-1}$.
It is clear  that  $\underline{h}$   is $K_1$-qc   with $\underline{h}(i)=i$.    Now  an application  of  Mori's theorem to
%$h= R_2\circ h \circ R_1$
$\underline{h}$  on $B_1$,  shows that $\underline{h}$ is $\alpha$ -H\"{o}lder continuous on $B_1$  with a multiplicative   constant  $16$
%$l^1=l^1(K_1)$
, where   $\alpha=1/K_1 $ and in particular   $|\zeta'|= |\underline{h}(\zeta)|\leq 16 |\zeta|^\alpha$.
%$\mathbb{U}$.   Suppose also that $l_0, 1/l_0$ are H\"{o}lder continuity constants for $h$ on $\mathbb{U}$.
Since   $h(0)=0$, there is  a constant  $l_0=l_0(K_1)$  such that  the part (ii)  of the theorem holds:
$|h(z)|\leq l_0 |z|^{1/K_1}$ if $z\in \mathbb{H}$  and  $|z| \leq 1$.

Proof of \rm{(III)}.  Set    $l_1=lyp(\gamma)$.  Since  $D$ satisfies the hypothesis  (Lyp-0), the item (d) in subsection \ref{ssAux}   holds.    By  the item  (d),  there is $\epsilon_1>0 $ such that  the trace   of the path  $\gamma_1$ which is  defined in (d)  by $v=F(u)$, $u\in [-\epsilon_1,\epsilon_1]$, where  $F$ is $C^1$,  is on  $\partial D$.
Let  $\gamma_2=\partial D \setminus tr(\gamma_1)$. Then   there is a constant $\epsilon_2 >0$  such that $\gamma_2$ has no points in the disk $B(\epsilon_2)$. Let  $L$  be  the  length of  $\partial D$ and  $\displaystyle{\hat{\gamma}}$ a parametrization  of the positively oriented boundary  $\partial D$    by the arc-length parameter $s$, where  $s \in[0,L]$  and  $s(0)=0$ (we need  arc-length parameter  only around $0$).   Set  $\displaystyle{\hat{\gamma} (s)= u(s) +i v (s)}$  and $w=R e^{i \Psi}$. Then
$$|v (s)|= |v (s) -v (0)|= |v' (s^1)| s\leq c_1 s^{1+\mu},$$ where  $c_1=l_1$.
% and
Since  a  quasicircle   satisfies  the arc-chord condition, we have    the following:

(a1) If a  $C^1$ curve is a quasicircle, then     $s \leq c_2 R $,    where     $c_2=  b_\gamma=b_\gamma^{arc}$.

Also we can prove  the following version of (a1).\footnote{we observed it after writting  a revision of the manuscript.}

(a2) By definition of  Lyapunov  curve  $\hat{\gamma}'(t)=1 + \epsilon(t)$, where  $|\epsilon(t)|\leq c t^\mu$.  Hence
$\hat{\gamma}(s)=\int_0^s\hat{\gamma}'(t)dt$  and therefore  there is  $\tau=\tau(c,\mu)$  such that  $s\leq 2 |\hat{\gamma}(s)|$ for $s \leq \tau$.

%%%%%%%%%%%%%%%%%%%%%%%%%%
Hence,  there is $\epsilon>0$  such  that
\begin{itemize}
 \item[(iv)]    $|arg \hat{\gamma} (s)|< \frac{\pi}{2}$ for   $s \in (0,\epsilon]$   and
\item[(v)]   $\frac{\pi}{2} < arg\,\hat{\gamma} (s) < 3\frac{\pi}{2}$ for   $s \in [L-\epsilon,L)$.
\end{itemize}
We can choose   $\epsilon <  \epsilon_2$.
Hence, for $s$ small ($0<s<\epsilon$),  we find
$$R |\Psi|\leq \frac{2}{\pi} R |\sin(\Psi)|=\frac{2}{\pi}v (s) \leq c_3 s^{1+\mu},\quad    \mbox{where}\quad   c_3= \frac{2}{\pi}c_1,$$  and therefore
there is a constant $c$  such that
\begin{itemize}
\item[(vi)] $|\Psi|\leq c R^{\mu}$,   where $c$  is given by the item (iii) in   Theorem  \ref{thmain0}.
\end{itemize}
Using  the mapping  $A(w)=-w$  and   (vi), we find that
\begin{itemize}
\item[(vi')] $|\pi - \Psi|\leq c R^{\mu}$  for   $s \in [L-\epsilon,L)$.
\end{itemize}
From  (vi)   and  (vi'),     (III)  follows.

Proof of \rm{(IV)}. We use the notation from  Proposition \ref{prop0}.  Set  $\breve{h}=h \circ Y_p$,  where    $p=h^{-1}(\infty)\in \mathbb{C}$. By the statement (I)  and  easy part of  Proposition \ref{prop0},  $h=\breve{h}\circ Y_p$, and $h$   is $K_1$-qc mapping of the plane $\overline{\mathbb{C}}$  onto  itself,  with   $\breve{h}(0)=0$ and      $\breve{h}(\infty)=\infty$.
We use
notation: polar coordinates
%$z= r e^{i\varphi}$ in $z$-plane
$\zeta= \rho e^{i\varphi}$ in the  $\zeta$-plane
and $w=R e^{i \Psi}$ in the $w$-plane.
%More generally, suppose that $D$ is quasi circle and that a curve $C_1$: $\Psi=\Psi_1(R)$,  $0\leqR\leq R_0$,  in $D$.
%Hence there is the  part $C_0$ of the  boundary of $D$  around $0$
% which      is the above of $C_1$.   Suppose further  that $\Psi_1$ is %increasing  in $R$. %and $|\Psi_0|  \leq \kappa_1 \Psi_1$.
Recall  by \rm{(III)} there  is   a curve  $\gamma=\gamma(c,\mu, R_0)$   in $D$. Set    $\gamma_0$: $\Psi=\Psi_0(R)= c
R^\mu$,  $0\leq R\leq R_0$. Hence there is
a part $C_1$ of the  boundary of $D$  around $0$ (say the right  half part)
 %is given by  $ \Psi=\Psi_0(R)$,  $0\leq R\leq R_0$. Consider ( $C_0$
 which      is below  $\gamma_0$  and which defines  the curve   $\gamma_1$.

Case 1.
%Suppose  first that  $h(\infty)=\infty$.
We first prove  for  $\breve{h}$.
Let $w=R e^{i \Psi}\in \gamma_0$  and let  $w'=R e^{i \Psi'}$   be the intersection  of the circle $T_R$   with  $\gamma_1$.    Then
$\theta(w,w')\leq  (|\Psi| + |\Psi'|) \leq 2 |\Psi|$.
Set    $\zeta=\breve{h}^{-1}(w)$  and $\zeta'=\breve{h}^{-1}(w')$.  Since  $\gamma_0$ is the right  half of  $\gamma=\gamma(c,\mu)$,  $\zeta'>0$  and
$\varphi= \theta(z,z')$.
Hence   using  the quasihyperbolic metric $k$ on
$\mathbb{C}^*$ (Proposition  \ref{prop1}), we have

$\varphi\leq \kappa_2 \Psi^{\alpha}$,  where  $\alpha=1/K_1$.
Since, by (II),   $R\leq c \rho^\alpha$, we find  $\varphi\leq \kappa_2
\big(\Psi_0(c\rho^\alpha)\big)^{\alpha}$ and therefore
%If we choose   $\Psi_0(R)= cR^\mu$,
we get  $\varphi\leq \kappa_3 r^{\alpha^2 \mu}$. Thus   we find:
\begin{itemize}
\item[(vii)]  the
curve  $\breve{h}^{-1}(\gamma_0)$  is below the curve $\gamma(\kappa_3,\alpha^2 \mu)$.
\end{itemize}
% Proof that $h$
%%%%%%%%%
Note that if a curve satisfies $\varphi \leq c \rho^{\mu}$, then it
is  is below the curve $\gamma(c,\mu)$.
Recall that   we set  $\mu_1=\mu/K_1^2$ and   $\varepsilon_1= (\varepsilon/l_0)^{K_1}$.  Note that   $\gamma_0$ is the right  half of  $\gamma=\gamma(c,\mu)$ and that  in a similar way as above we conclude  that
\begin{itemize}
\item[(viii)]     $\breve{h} ^{-1}(\gamma(c,\mu))$ is below the curve
$\gamma(c_1,\mu_1)$, $\rho<\varepsilon_1$.
\end{itemize}
%?? We can choose  $\varepsilon_1$  such that
By the part (ii)  of the theorem,
$h(B(\varepsilon_1)) \subset B(\varepsilon)$ and  it is readable that it  yields (a). Since $\breve{h}^{-1}$  is also qc (a)  implies (b). Thus we have proved
\rm{(IV)} for $\breve{h}$   with $c_1=\kappa_3$.

Case 2.  Proof for $h$.
%Suppose now  that  $b_0=h^{-1}(\infty)\in \mathbb{C}$.
By Case 1,  there is $H_0'$   such   that  $\breve{h}(H_0')\subset D_0$.
By  Proposition \ref{prop0}   there is $H_0$   such   that  $Y(H_0) \subset H_0'$ and it completes proof.
Thus we have proved (a).

Let us prove  that  (a) implies (b).
Namely,  since $h^{-1}$  is also qc, by (a)  there is  $D'_0$  such that
$h^{-1}(D'_0)\subset H_0$  and therefore   $D'_0 \subset h(H_0)$.
\end{proof}

\subsection{Global approximation}\label{ss3.1}
Concerning the previous theorem,  note that  $\mu_1\leq  \mu$  and    $\varepsilon _1\leq  \varepsilon$,
and  in particular,  one can derive (see (IV')):
\begin{itemize}
\item[(a)]
there is  $\underline{\epsilon}=\underline{\epsilon}(\varepsilon,c)< \varepsilon_1$    such that      $h(L^1)\subset L'\subset D_0$, where
 $L'= {\rm Lyp}^- (\varepsilon,c)$ is  $\mu$-Lyapunov   and   $L^1= {\rm Lyp}^- (\underline{\epsilon},c)$ is  $\mu_1$-Lyapunov  and   $L^1 \subset L'$. Hence,  since $h^{-1}$  is also qc,
$h^{-1}(L^1)\subset L'$  and therefore   $L^1 \subset h(L')$.
%\subset D_0$.
\item[(b)]  In a similar way,   there is  $\underline{\epsilon}^1\leq \underline{\epsilon}$   and $\mu_2\leq  \mu_1$   such that  $h^{-1}(L^1_-)\subset L^1$, where
$L^1_- = {\rm Lyp}^- (\underline{\epsilon}^1,c)$ is  $\mu_2$-Lyapunov.
\end{itemize}
Hence we  derive:
\begin{itemize}
\item[$\rm{(IVa)}$]  If  $h$ satisfies  the hypothesis  $\mathbb{H}_{qc}^0$, then     $L^1_- \subset h(L^1) \subset D_0$.
\end{itemize}
Note that  it is easy to transfer  Theorem \ref{thmain0}   to  the setting  of  the unit disk.  Now we show that   the corresponding version of  it  holds with $\mathbb{U}$  instead of  $\mathbb{H}$.

% $\mu$-  \subset h(L')
We first need a version of $\rm{(IV')}$  for $\mathbb{U}$   with   special  Lyapunov   convex  domains.
Note that  $H_0$  has two cusps.
%Let  $B^0$ be the maximal disk in  $H_0$ with center at $i\varepsilon_1$.
%There is  a  Lyapunov -  $ C^{1,\mu_1}$ domain $H_0^-= H_0^-(\varepsilon)$  which touches $0$ and which is a subset of $H_0$ and contains  $B^0$.
%%%%%%%%%%%
%The
In this subsection  by $D_0$  we denote the set defined in  Theorem \ref{thmain0}.
\begin{definition}[${\rm lyp}(D)_b$]\label{dlyp2}
\begin{itemize} Here we define   $\underline{A}_0$,  $R_a$, $T_b$ and   $h_a$.
\item[(i)] Consider the conformal
mapping $A_0=\underline{A}_0$ defined  by  $\underline{A}_0(z)= \frac{4i-z}{4i+z}$;   $\underline{A}_0$  maps   $\mathbb{H}$ onto $\mathbb{U}$ such that $\underline{A}_0(0)=1$  and $\underline{A}_0(-4i)=\infty$.

%:    Let  $h:\mathbb{U}\rightarrow D$ be harmonic  $K$-qc map, where
%$\mathbb{U}$ is the unit disk  and suppose $D$ is a Jordan domain
%with $C^{1,\mu}$ boundary $\partial D$ positively oriented, and suppose that $0 \in \partial D$, $h(1)=0$, and that $D$ has real axis as tangent at  $0$, with inner normal pointing upwards.\\
\item[(ii)]  For   $a=e^{i\alpha}  \in \mathbb{T}$   define
$R_a(z)=e^{i\alpha}z$,  and for  $b\in \partial D$   if    the unit inner normal $n_b=e^{i\beta}$ at $b$ exists,  we define   $T_b(w)=\underline{T}_b(w)= - i e^{i\beta} w +b$, and
\item[(iii)] if $D$ satisfies the hypothesis  {\rm (Lyp-0)}, we define    $\hat{D}_b= T_b (\hat{D}_0)$, where $\hat{D}_0$  is defined in the item  (A) below.  If we wish to indicate that $\hat{D}_b$ is an elementary Lyapunov domain we use notation    ${\rm lyp}(D)_b$.
%It is clear that  $D_b^-= T_b (D_0^-)$
\item[(iv)] For $a\in \mathbb{T}$,  set  $h_a=h_a^b :=\underline{T}_{b}^{-1} \circ   h\circ R_{a}$, $a \in \mathbb{T}$,   where $b=h(a)$, and let $\hat{h}=h\circ A_0$ and $\hat{h}_a=h_a\circ A_0$ .
 \end{itemize}
\end{definition}
%\item
\begin{itemize}
\item[(v)]  Next  suppose that $D$ satisfies the hypothesis  {\rm (Lyp-0)} \rm{(}see Definition  \ref{1.3}\rm{)}.

We will prove  that(see also Proposition  \ref{luap1}):

\item[(A)]
there is  $\varepsilon_2>0$  such that  $\hat{D}_0\subset T_{b}^{-1}(D)$   for every $b\in \partial D$, where  $\hat{D}_0= {\rm Lyp}^- (\epsilon_2,c)$.
\hfill
 \item[(vi)] In addition to  (v)  suppose that   $h$ is a qc mapping of $\mathbb{U}$ onto $D$ and  $h(1)=0$ (that is  $h$ satisfies  the hypothesis  $\mathbb{U}_{qc}^0$).
\end{itemize}
Then  $\hat{h}$ is a qc mapping of $\mathbb{H}$ onto $D$ with   $\hat{h}(0)=0$.

It seems useful to consider the following properties, which is an immediate corollary  of $\rm{(IVa)}$:
% will be improved  in  Proposition  \ref{luap1}). ??  Hence,  by  ,
\begin{itemize}
\item[(B)]
%\hat{H}_0  in $\mathbb{H}$
If   $h$ satisfies  the hypothesis  $\mathbb{U}_{qc}^0$,  there are    corresponding    elementary  Lyapunov  domains $H^1\subset \mathbb{H}$ and   $D^1_-\subset D$   with vertex at $0$    such that
$D^1_{-}\, \subset \hat{h}(H^1) \subset \hat{D}_0$. See Proposition  \ref{luap1} for a stronger result.
\item[(vii)]  Further set    $U_1= A_0(H^1)$  and   $U^1_- =A_0(D^1_-)$   and set  $U_a=R_a (U_1)$ .
\end{itemize}
Note that  $H^1$   is a  special   $\mu_1$-Lyapunov. Thus we have
\begin{itemize}
\item[(C)]     $U^1_-  \subset h(U_1)\subset\hat{D}_0$.
\end{itemize}
%%%%%%% more general
In order to state a corresponding   form of   $\rm{(IVa)}$  for $\mathbb{U}$,  it is convenient to call $V=A_0(L)$  an elementary domain if $L$ is elementary (see also Proposition \ref{prop0}).
Now,  by $\rm{(IVa)}$, it is clear  that we have:
\begin{itemize}
\item[$\rm{(IVb)}$]If  $h$ satisfies  the hypothesis  $\mathbb{U}_{qc}^0$  and  $L$ is an elementary  Lyapunov  domain with vertex at $0$  in $\mathbb{H}$,
then  there are  elementary Lyapunov  domains   $V_1$ and   $V^1_-$ in $\mathbb{U}$  with vertex at $1$   such that    $V^1_-  \subset h(V_1)\subset L$.
\end{itemize}
%%%%%%%%% ??  $\mu$-

  (D)   Now,  we also  suppose that  $h$ satisfies  the hypothesis  $\mathbb{U}_{qc}^0$.
Recall by the item (i) from  subsection \ref{ssAux},  then  $h$  has a $K_1$-qc extension to a map $\tilde{h}$    of the complex
plane.
%,  which by abuse of notation we denote again by $h$  if there is no possibility of confusion.

We can choose  $p=\tilde{p}$  such that  $h(\tilde{p})=\infty$ and  $|\tilde{p}|\geq 3$.
Set   $\hat{p}=A_0^{-1}(\tilde{p})$,  $\tilde{p}_\alpha=h_a^{-1}(\infty)$  and  $\hat{p}_\alpha=A_0^{-1}(\tilde{p}_\alpha)$. Check that $\tilde{p}\notin h(B(0,2))$  and   therefore  $|\hat{p}|\geq 2$.Hence, since   $\tilde{p}_\alpha=h_a^{-1}(\infty)= e^{-i\alpha}\tilde{p}$, we find

$(D0)$:  $h_a^{-1}(\infty)\notin h(B(0,2))$  and   therefore,
$|\hat{p}_\alpha|\geq 2$  for  every   $a=e^{i\alpha}  \in \mathbb{T}$.

Note that   $T_{b} \circ h_a  =h\circ R_{a}$  and that
$h$ satisfies  the hypothesis $\mathbb{U}_{qc}^0$,  ($U$-1) if and only if  $\hat{h}$  satisfies  $\mathbb{H}_{qc}^0$, ($H$-0) respectively.

%By the notation and  statement of Theorem  \ref{thmain0}, we conclude that  \\
%$\rm{(IV'')}$  There is a  special   $\mu$-Lyapunov   convex  domain  $\hat{L}_0 \subset D_0$
%$\mathcal{D}_1$  and   $D_0$ are
%such that
%%%%%%%%%% By (D)  there is  $r>0$  such that  for  each   $w \in \partial D \cap B(r)$  graph with respect   to $uv$-coordinates.
%%%%%%%%%%%%%%%%%%%%%
In addition, we need a property of  $C^1$ domains.   Suppose that domain $D$  is  $uv$-plane.
\begin{lemma}\label{l_app1}
\begin{itemize}\hfill
\item[(i)]
Suppose that a $C^1$ domain  $D$ satisfies the hypothesis \rm{(Sp0)}.
\item[(I)] Then   there is  $r>0$  such that  for  each   $w \in \partial D$,    $\underline{T}^{-1}_w(\partial D) \cap B(0, r)$  is a   graph with respect   to $uv$-coordinates.
\end{itemize}
\end{lemma}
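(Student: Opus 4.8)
The plan is to make a uniform version of the usual ``$C^1$ boundary is locally a graph'' statement, with the uniformity coming from the compactness of $\partial D$ together with the continuity of the unit normal. First I would fix the hypothesis (Sp0): the tangent to $\partial D$ at $0$ is the real axis, with inner normal $n_0 = i$. Since $D$ is a bounded $C^1$ Jordan domain, the unit inner normal $n_w = e^{i\beta(w)}$ is defined and continuous in $w \in \partial D$, and $w \mapsto \beta(w)$ is a continuous function on the compact set $\partial D$. The Möbius-type map $\underline{T}_w(\zeta) = -i e^{i\beta(w)}\zeta + w$ (from Definition \ref{dlyp2}(ii)) is precisely the rigid motion that sends $0$ to $w$ and rotates $i$ (the inner normal of the model) onto $n_w$; so $\underline{T}_w^{-1}(\partial D)$ is a $C^1$ curve through $0$ whose tangent there is the real axis and whose inner normal there is $i$, i.e.\ $\underline{T}_w^{-1}(D)$ itself satisfies (Sp0).

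Next I would pin down the local-graph statement for a \emph{single} base point in a quantitative way. If $\Gamma = \underline{T}_w^{-1}(\partial D)$ is parametrized by arc length $g_w(s) = u_w(s) + i v_w(s)$ with $g_w(0) = 0$ and $g_w'(0) = 1$, then $C^1$-ness means $u_w'(0) = 1$, $v_w'(0) = 0$; the curve is a graph $v = F(u)$ over an interval $(-\delta,\delta)$ as soon as $u_w'(s) > 0$ there, i.e.\ as soon as $|g_w'(s) - g_w'(0)| < 1$ on the corresponding $s$-interval. The size of that interval is controlled by the modulus of continuity of $g_w'$, which in turn is controlled (for the original $D$, before applying $\underline{T}_w$) by the global modulus of continuity of the tangent vector field of $\partial D$ — and since $\partial D \in C^1$ is compact, this modulus of continuity is uniform in the base point. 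Composing with the rigid motions $\underline{T}_w$ does not change it. This is exactly the point where I expect the only real work: extracting a single $r > 0$ that works simultaneously for all $w$, rather than one $r(w)$ per point. Concretely, by uniform continuity of the tangent direction there is $\ell_0 > 0$ so that along any subarc of $\partial D$ of length $\le \ell_0$ the tangent direction varies by less than, say, $\pi/4$; pulling this back through $\underline{T}_w^{-1}$ (an isometry) and choosing $r$ small enough that $B(0,r) \cap \underline{T}_w^{-1}(\partial D)$ lies on such a short subarc for every $w$ — possible because the arc-chord condition (item (c1) in subsection \ref{ssAux}, valid for $C^1$ curves, hence for $\partial D$, hence for each isometric copy $\underline{T}_w^{-1}(\partial D)$ with the \emph{same} constant) bounds arc length by a fixed multiple of chord length — gives the desired uniform $r$.

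Finally I would assemble: for that $r$, and any $w \in \partial D$, the set $\underline{T}_w^{-1}(\partial D) \cap B(0,r)$ is a connected subarc on which the tangent direction stays within $\pi/4$ of the real axis, hence the $u$-coordinate is strictly monotone along it, hence it is the graph of a function $v = F_w(u)$ in the $uv$-coordinates, proving (I). I do not expect any subtlety beyond the uniformity bookkeeping; the arc-chord constant and the uniform modulus of continuity of the $C^1$ tangent field are the two facts doing all the work, and both are available for compact $C^1$ Jordan curves and are preserved by the rigid motions $\underline{T}_w$.
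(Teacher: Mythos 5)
Your argument is correct, and it rests on the same underlying fact as the paper's proof: the tangent field of a compact $C^1$ Jordan curve is uniformly continuous, and the rigid motions $\underline{T}_w$ preserve everything, so one radius $r$ works for all base points. The execution differs in two ways. The paper works with uniform continuity of the tangent direction as a function of the boundary point itself (i.e.\ with respect to euclidean distance $|w_2-w_1|$ on $\partial D$), and then argues by contradiction: if the piece of boundary in $B(w_0,r)$ were not a graph, two of its points would lie on a line parallel to $n_{w_0}$, and a Rolle-type (mean value) step produces a point of the set where the tangent is parallel to $n_{w_0}$, contradicting the $\pi/8$ oscillation bound. You instead measure continuity in the arc-length parameter and invoke the arc-chord property (item (c1) of subsection \ref{ssAux}) to force $B(0,r)\cap\underline{T}_w^{-1}(\partial D)$ into a single short subarc, on which strict monotonicity of the $u$-coordinate gives the graph property directly, with no contradiction step. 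The trade-off: the paper's route avoids any appeal to the arc-chord condition, while your route makes explicit why the intersection with the ball cannot contain pieces of the curve that are far away in arc length (the scenario in which the paper's Rolle point would not obviously lie in the ball), so your bookkeeping is a bit more robust at exactly the point where the paper's proof is terse. Both are legitimate; nothing is missing from your argument.
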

%Proof of  (D).
\begin{proof}
Let  $L$  be  the  length of  $\partial D$ and  $\displaystyle{\hat{\gamma}}:[0,L]\rightarrow \partial D$  a parametrization  of the positively oriented boundary  $\partial D$    by the arc-length parameter $s$.  We also  write  $w= \hat{\gamma}(s)$,   where  $s \in[0,L]$  and  $s(0)=0$.
% By the hypothesis  then   there is  $r_1>0$  such that
%\begin{itemize}
%\item[(ii)]    $|\arg \hat{\gamma}'(s)| < \pi/8 $  or  $|\arg \hat{\gamma}'(s)-\pi | < \pi/8 $  for    $w= \gamma(s) \in \partial D \cap B(r_1)$.
%\end{itemize}
Here  there is  the function  $s=s(w)$  which  is the inverse  of   the function  $w= \hat{\gamma}(s)$ and which maps  $\partial D$  onto $[0,L]$.
Since   $\displaystyle{\hat{\gamma}'}$   is  continuous on $[0,L]$, it is  uniformly  continuous on $[0,L]$.
The function  $s=s(w)$   is continuous  on  $\partial D$   and  hence  $C(w)= \displaystyle{\hat{\gamma}'(s(w) )}$  is continuous  on  $\partial D$  and
uniformly  continuous on  $\partial D$.
Therefore there is $r_2>0$  such that
%Set  $r=r_2/2$.   Then we have:  T_{w_0}
\begin{itemize}
\item[(ii)]   $|\arg \hat{\gamma}'(s_2)-\arg \hat{\gamma}'(s_1)| < \pi/8 $\,   for\,  $|w_2-w_1|\leq r_2$,   where  $w_k= \hat{\gamma}(s_k)$, $k=1,2$.
\end{itemize}
Let us to prove (I)  for  $r=r_2/2$.   Contrary,  suppose that (I) is not true. Then  for some $w_0\in \partial D$,
$D(w_0,r):=(\partial D) \cap B(w_0, r)$  is not a graph  with respect to coordinates determined by unit vectors  $\hat{\gamma}'(w)$  and $n_w$.  Hence   there are two points $w_1$ and  $w_2$  in this set  such that  $w_1 w_2$ is parallel  to  the normal $n_{w_0}$  of $\partial D$ at
$w_0$. Therefore there is $w_3$   in this set  such that     $\gamma'(s)$   at  $w_3$   is parallel  to  the normal $n_{w_0}$. This contradicts     (ii).
Thus
%for $r=r_3$
we have (I).
\end{proof}
%  Further  chose a fixed positive real number  $x_0 \in U_1$.
Using  the approach in  the proof  of  statement \rm{(III)},   \rm{(IV)},     $\rm{(IV')}$ of  Theorem  \ref{thmain0},     $\rm{(IVb)}$  and  (iii), we can  prove
%$\rm{(V)}$:
\begin{proposition}\label{luap1}
\begin{itemize}
\hfill \item [\rm{(a):}]  Suppose that   $D$ satisfies the hypothesis  {\rm (Lyp-0)}.
  %?? One   can choose $\varepsilon$ in part  \rm{(III)} (that is  $\hat{D}_0=\hat{D}_0(\varepsilon)$ and  $U_1=U_1(\varepsilon)$) such that  \\
\item [\rm{(I):}]  there   is   an   elementary    Lyapunov domains   $\hat{D}_0$ in $D$   with vertex at $0$   such that
% and   $L^1$
$\hat{D}_0\subset\underline{ T}_{b}^{-1}(D)$.
%that is  ${\rm lyp}(D)_b\subset D$  for every $b\in \partial D$,    and
\item [\rm{(b):}] In addition to  {\rm(a)}   suppose that   $h:\mathbb{U}\rightarrow D$ is a qc homeomorphism.
\item [\rm{(IIa):}] Then  there is  a  Lyapunov domain  $\hat{U}_1$  in $\mathbb{U}$  with vertex at $1$  such that    for every  $a \in \mathbb{T}$,  $h_a(\hat{U}_1)\subset \hat{D}_0$.
%, that is   $h(U_a)\subset {\rm lyp}(D)_b$, where $U_1=A_0(L^1)$    and  $b=h(a)$.\\
\item [\rm{(IIb):}]  In addition, there   is  an  elementary    Lyapunov domain   $D_0^-={\rm Lyp}(\varepsilon_0^-,c_0,\mu)$ in $D$ with vertex at $0$    such that
$D_0^-\subset h_a(\hat{U}_1)$ for every  $a \in \mathbb{T}$.

\item [\rm{(III):}]   $D_0^-\subset h_a(\hat{U}_1)\subset   \hat{D}_0$
\end{itemize}
\end{proposition}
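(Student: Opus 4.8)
The plan is to reduce Proposition~\ref{luap1} to the half-plane statements already established, namely \rm{(III)}, \rm{(IV)} and \rm{(IV')} of Theorem~\ref{thmain0} together with \rm{(IVb)}, by transplanting everything through the fixed Möbius map $A_0=\underline A_0$ and the normalizing isometries $R_a$, $T_b$. First I would prove part \rm{(I)} (equivalently item (A)/(v) of the preceding subsection): for each $b\in\partial D$ the rotated-translated domain $\underline T_b^{-1}(D)$ again satisfies (Sp0), and by Lemma~\ref{l_app1} there is a single radius $r>0$, independent of $b$, on which $\underline T_b^{-1}(\partial D)\cap B(0,r)$ is a graph $v=F_b(u)$. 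One then runs the estimate in the proof of \rm{(III)} of Theorem~\ref{thmain0} uniformly in $b$: the Lyapunov multiplicative constant $l_1=\mathrm{lyp}(\gamma,\mu)$ and the arc-chord constant $b_\gamma$ are intrinsic to $\gamma=\partial D$ and do not change under the Euclidean isometry $\underline T_b$, so the constant $c=l_2=\frac\pi2 l_1 b_\gamma^{1+\mu}$ and the threshold $\varepsilon$ work simultaneously for all $b$. Taking $\varepsilon_2>0$ small enough (smaller than the $r$ from Lemma~\ref{l_app1} and than the uniform $\varepsilon$) gives an elementary Lyapunov domain $\hat D_0={\rm Lyp}^-(\varepsilon_2,c)$ with $\hat D_0\subset \underline T_b^{-1}(D)$ for every $b$, which is exactly \rm{(I)}.

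For parts \rm{(IIa)}, \rm{(IIb)} and \rm{(III)} I would work with the conjugated maps $h_a=\underline T_b^{-1}\circ h\circ R_a$ and $\hat h_a=h_a\circ A_0$, where $b=h(a)$. The key observation, already noted in (D), is that $h$ satisfying $\mathbb U_{qc}^0$ forces each $\hat h_a$ to satisfy $\mathbb H_{qc}^0$: indeed $R_a$ is a rotation of $\mathbb U$ fixing the relevant structure, $\underline T_b$ is a Euclidean isometry sending $0\in\partial D$ to $b$ with the correct normal orientation, and $A_0$ maps $\mathbb H$ onto $\mathbb U$ with $A_0(0)=1$; composing, $\hat h_a:\mathbb H\to \underline T_b^{-1}(D)$ is $K_1$-qc with $\hat h_a(0)=0$ and target a Lyapunov domain satisfying (Sp0). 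Crucially one must check the constants are uniform in $a$: the qc constant $K_1$ is that of the fixed extension $\tilde h$ and is unchanged by pre/post-composition with conformal or isometric maps; the Lyapunov data of $\underline T_b^{-1}(D)$ are $b$-independent as above; and by $(D0)$ the pole $\hat p_\alpha=A_0^{-1}(h_a^{-1}(\infty))$ satisfies $|\hat p_\alpha|\ge 2$ uniformly, which controls the Möbius factor $Y_p$ appearing in the proof of \rm{(IV)}. Hence applying $\rm{(IVa)}$/$\rm{(IVb)}$ to $\hat h_a$ (or directly $\rm{(IV)}$ of Theorem~\ref{thmain0}) produces elementary Lyapunov domains $H^1\subset\mathbb H$ with vertex at $0$ and $D_0^-={\rm Lyp}(\varepsilon_0^-,c_0,\mu)\subset D$ with vertex at $0$, with constants depending only on $K_1$, $\mu$, $l_2$, $\varepsilon$, and the uniform pole bound, so that $D_0^-\subset \hat h_a(H^1)\subset \hat D_0$ for every $a$. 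Transporting the source side back by $A_0$, set $\hat U_1=A_0(H^1)$, a Lyapunov domain in $\mathbb U$ with vertex at $1$; then $h_a(\hat U_1)=\hat h_a(H^1)$ gives \rm{(IIa)}, \rm{(IIb)} and \rm{(III)} at once.

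The main obstacle I expect is the \emph{uniformity in $b$ (equivalently in $a$)}: the half-plane theorem is stated for one domain at a time, and one must verify that every constant entering its proof --- the Kellogg constant of the image in \rm{(II)}, the Lyapunov constant $c=l_2$ in \rm{(III)}, the constant $c_1$ in \rm{(IV)}, and in particular the location $|\hat p_\alpha|$ of the pole of the extended qc map --- can be bounded independently of the base point. The Lyapunov and arc-chord constants cause no trouble since they are isometry-invariant properties of the single curve $\partial D$; the graph radius is handled by Lemma~\ref{l_app1}; the pole is handled by $(D0)$; the only genuinely delicate point is the Kellogg/Hölder step, where one needs that the normalized maps $\underline h_a$ (onto a fixed reference disk) remain uniformly $K_1$-qc and that the reference conformal maps $R_2$ can be chosen with $b$-independent bi-Lipschitz bounds --- this follows because the family $\{\underline T_b^{-1}(D)\}_{b\in\partial D}$ is a compact family of congruent Lyapunov domains, so a single pair of Kellogg constants $\underline k_1,\underline k_2$ serves all of them. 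Once these uniform bounds are in place the rest is a direct pull-back of the already-proved half-plane statements, and \rm{(III)} is just the conjunction of \rm{(IIa)} and \rm{(IIb)}.
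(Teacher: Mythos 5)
Your proposal is correct and follows essentially the same route as the paper: part (I) via Lemma \ref{l_app1} and the isometry-invariance of the Lyapunov and arc-chord constants run uniformly through the proof of (III) of Theorem \ref{thmain0}, and parts (IIa)--(III) by applying $\rm{(IVb)}$ (with the pole bound $(D0)$ and Proposition \ref{prop0} with $r_0=1$) to the conjugated maps $\hat h_a$, the only delicate point being the uniform Kellogg/M\"ori--H\"older estimate for the family $\{h_a\}$, which you, like the paper, obtain from the congruence of the domains $\underline T_b^{-1}(D)$. No essential difference from the paper's argument.
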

Note that in general $h_a(\mathbb{U})$  is not a fixed domain  for  $a\in \mathbb{T}$ and therefore we need  first to  consider the part (I) and then the part (II).
\begin{proof}

%This is the statment (I6), proved in subsection  \ref{ss}.
% =\underline{F}(h  \mathbb{U} = \underline{D}

(I).    $D_b$ satisfies the hypothesis  {\rm (Lyp-0)} for every $b\in \partial D$.  Consider the   family $\underline{\mathcal{D}}:=\{D_b=\underline{T}_b^{-1}(D) : b\in \partial D\}$.  For $b\in \partial D$  define   $\varepsilon(b)$ to be maximum of $\varepsilon$ for which
${\rm Lyp}(\varepsilon,l_2(D))\subset \overline{D_b}$, where $l_2(D)$  is  the second Lyp-constant.
By Lemma  \ref{l_app1},   there is  $r>0$  such that    $D_b \cap B(r)$ is  a  graph with respect   to $uv$-coordinates   for  each $b\in \partial D$.
Since    all domains $D_b$, $b\in \partial D$,  have the same   Lyapunov  multiplicative constants,
using (iii) and  the  approach in the  proof  \rm{(III)}  of  Theorem  \ref{thmain0}, we can prove  that there is $\varepsilon_0 >0$   such that $\varepsilon(b)\geq \varepsilon_0$ for  all  $b\in \partial D$,   and therefore   an  elementary   Lyapunov  domain $\hat{D}_0$
such that $\hat{D}_0\subset D_b$  for every  $ b\in \partial D$  and (I) follows.\\
In addition,  it seems that  we can prove  that the function  $\varepsilon(b)=\varepsilon_D(b)$  is continuous  with respect to $b$.
%D_b=T_b^{-1}(\mathbb{U}  a  special   We suppose that  $a_0=h(i)$ is given

(IIa):    Recall that we use the  notation     $a_0=h(0)$  and $\hat{h}_a=  h_a\circ \underline{A}_0$.
%There  is $r_0> 0$   such that   $B(a_0;r_0)\subset D$.
%$B(T_{b}^{-1}(a_0);r_0)\subset D$.
Let  $\omega_b$  be a conformal mapping of $D_b$ onto $\mathbb{U}$  such that   $\omega_b(\underline{T}_{b}^{-1}(a_0))=0$  and $\omega^b= \omega_b\circ \underline{T}_{b}^{-1}$. Since $\omega^b(0)=0$, by the Kellogg-Warshawski  theorem there are two positive constants  $l_1$ and $l_2$  such that
   $l_1\leq |\omega'(w)|\leq l_2  $, $w \in D$.    Since  $\underline{T}_{b}^{-1}$ is a euclidean isometry,  we have $|(\underline{T}_{b}^{-1})'|=1$
on $D$,  and  therefore
   $l_1\leq |\omega_b'(w)|\leq l_2  $, $w \in D_b$.
Hence,   since   $\omega_b\circ h_a$  maps $\mathbb{U}$  onto itself,     $\omega_b\circ h_a(0)=0$ and $h_a(1)=0$
there is a constant $l^0$ which depends on  $K$  and the Kellogg multiplicative constant  of $D$(with respect to $a_0$), such that for all $a \in \mathbb{T}$,
  $|h_a(z)|\leq l^0 |z-1|^{1/K}$ if  $|z| \leq 1$.

Using $\underline{A}_0$  we can get the corresponding result  for  $\hat{h}_a$:
there is a constant $l_0$ which depends on  $K_1$  and the Kellogg multiplicative constant  of $D$, such that for all $a \in \mathbb{T}$,
\begin{itemize}
\item [(iv):]   $|\hat{h}_a(z)|\leq l_0 |z|^{1/K_1}$ if  $|z| \leq 1, {\rm Im }z\geq 0$.
\end{itemize}
 The functions $\hat{h}_a$, $a \in \mathbb{T}$, are  $K_1$-qc.   By    (iv),  an  application  of  $\rm{(IVb)}$  to the functions $\hat{h}_a$, $a \in \mathbb{T}$,
and the statement  ($D0$) (from   subsection   \ref{ss3.1}),   and  the item (iii) of  Proposition \ref{prop0} with $r_0=1$ to the functions $\hat{h}_a$, $a \in \mathbb{T}$, show that   there is  a  Lyapunov domain  $\hat{H}_0$  in $\mathbb{H}$  with vertex at $0$  such that   $\hat{h}_a(\hat{H}_0)\subset \hat{D}_0$. Set   $\hat{U}_1=A_0(\hat{H}_0)$.
It yields  the  proof of (II).
%%%%% X1
\begin{itemize}
\item [(IIb):]    Using  $\rm{(IVb)}$, since the corresponding parameters are the same for $h$ and $h_a$, one can get (III).
\end{itemize}
\item [(III)]  It is clear  that  (IIa)  and  (IIb) can be stated as (III).
\end{proof}

%Using  statement \rm{(IV)}  of  Theorem  \ref{thmain0}, weprove\rm{(V)}.
%(see  V)
It is convenient to introduce the following notation:
\begin{itemize}
\item [(v):] For  $b\in \partial D$,  set  $D_b^-={\rm lyp}(D)_b^-= \underline{T}_b (D_0^-)$  and  for $a\in \mathbb{T}$, $\hat{U}_a= R_a (\hat{U}_1)$.
 If we wish to indicate that $D_b^-$ is an elementary Lyapunov domain we use notation    ${\rm lyp}(D)_b^-$.
\end{itemize}
Now using euclidean isometry  $\underline{T}_b$ it is easy to get the corresponding results  of the property (III) of  Proposition \ref{luap1}  for domains with vertexes at $b$.
Namely,  by  the property (III) of  Proposition \ref{luap1}   we have     $\underline{T}_b(D_0^-)\subset \underline{T}_b(h_a(\hat{U}_1))\subset  \underline{ T}_b(\hat{D}_0)$.  By the definitions
$\underline{T}_b \circ h_a (\hat{U}_1)= h \circ R_a (\hat{U}_1)=h(\hat{U}_a)$
and therefore the part (I) of the next theorem follows.
%with
%Lemma \ref{le_Lyp1}
By (L1) (see the introduction) we get  the part (II).  So we have
%yields
the crucial result:
%Theorem \ref{tmain1}
\begin{theorem}\label{tmain1}
Suppose that $D$  is  a  Lyapunov   domain  and  $h:\mathbb{U}\rightarrow D$ is a qc homeomorphism. Then
\begin{itemize}
\item[\rm{(I)}]  For every  $a \in \mathbb{T}$,   ${\rm lyp}(D)_b^- \subset h(\hat{U}_a)\subset {\rm lyp} (D)_b$, where $b=h(a)$.
\item[\rm{(II)}]  If  $w-b$ is in the direction of the normal vector $n_b$ then,
%Lemma \ref{le_Lyp1},
$d_b(w)\approx |w-b|$  if $ \varepsilon_2=\varepsilon_2(c_2,\mu_2)$ is a small enough constant.
\end{itemize}
\end{theorem}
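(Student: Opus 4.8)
The plan is to assemble Theorem \ref{tmain1} from the pieces already developed, treating part (I) as a reformulation of Proposition \ref{luap1}(III) transported by a Euclidean isometry, and part (II) as an elementary consequence of (L1). For part (I), I would start from the conclusion $D_0^-\subset h_a(\hat U_1)\subset \hat D_0$ of Proposition \ref{luap1}(III), valid for every $a\in\mathbb T$ with $\hat D_0$, $D_0^-$, $\hat U_1$ domains of a fixed shape depending only on $D$ and $K$. Applying the Euclidean isometry $\underline T_b$ (where $b=h(a)$), which preserves the class of elementary Lyapunov domains and their shape constants, gives $\underline T_b(D_0^-)\subset \underline T_b(h_a(\hat U_1))\subset \underline T_b(\hat D_0)$. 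Then I would unwind the definitions: by Definition \ref{dlyp2}(iv), $h_a=\underline T_b^{-1}\circ h\circ R_a$, so $\underline T_b\circ h_a=h\circ R_a$ and hence $\underline T_b(h_a(\hat U_1))=h(R_a(\hat U_1))=h(\hat U_a)$ by the notation (v). Finally, by the notation (v) again, $\underline T_b(D_0^-)={\rm lyp}(D)_b^-$ and $\underline T_b(\hat D_0)={\rm lyp}(D)_b$ (using Definition \ref{dlyp2}(iii) for $\hat D_b=T_b(\hat D_0)$). This yields ${\rm lyp}(D)_b^-\subset h(\hat U_a)\subset {\rm lyp}(D)_b$, which is exactly (I).

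For part (II), I would argue purely locally near $b$ along the inner normal $n_b$. Write $w=b+t\,n_b$ for small $t>0$. The upper bound $d_b(w)\le |w-b|=t$ is immediate since $b\in\partial({\rm lyp}(D)_b^-)$, so the distance from $w$ to the set is at most the distance to the point $b$. For the lower bound I would use that ${\rm lyp}(D)_b^-$ is (the image under $\underline T_b$ of) an elementary Lyapunov domain $D_0^-={\rm Lyp}(\varepsilon_0^-,c_0,\mu)$ with vertex at $0$ and, crucially, that such a domain has a genuine tangent line at its vertex with the inner normal pointing into the domain — there is no cusp at the vertex (as remarked after the definition of elementary Lyapunov domains, $\gamma_0$ and the circle arc share common tangents). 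Hence, in the rotated-translated coordinates, for $t$ small the nearest boundary point of $D_0^-$ to the normal ray point is realized either on one of the two elementary Lyapunov boundary pieces $\Psi=c_0|w|^\mu$ or $\pi-\Psi=c_0|w|^\mu$, and a direct estimate shows the Euclidean distance from $(0,t)$ to $\{\Psi\le c_0 r^\mu\}$ is $\ge t-O(t^{1+\mu})\ge t/2$ once $t<\varepsilon_2$ for a suitable small $\varepsilon_2=\varepsilon_2(c_2,\mu_2)$ determined by the shape constants. Combining, $d_b(w)\approx|w-b|$ as claimed; this is precisely the content of (L1), so I would simply invoke (L1).

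The main obstacle, such as it is, is bookkeeping rather than mathematics: one must check that all the transported objects ($\hat U_a$, ${\rm lyp}(D)_b^-$, ${\rm lyp}(D)_b$) are consistently defined across $a\in\mathbb T$ and that the shape constants (the $\varepsilon_0^-,c_0,\mu$ for $D_0^-$ and the corresponding data for $\hat D_0$) are uniform in $b$ — this uniformity is exactly what Proposition \ref{luap1}(I)--(III) was engineered to provide, since all the domains $D_b=\underline T_b^{-1}(D)$ share the same Lyapunov multiplicative constant and, by Lemma \ref{l_app1}, a uniform radius $r$ on which $\partial D_b$ is a graph. A secondary point worth stating carefully is that $\underline T_b$ depends continuously on $b$ and is an orientation-consideration-free Euclidean isometry, so it neither distorts lengths nor changes the order $\mu$ of the Lyapunov curves; this guarantees that ``of a fixed shape'' is preserved under the transport. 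Once these uniformities are in hand, both (I) and (II) follow by the formal manipulations above with no further estimates.
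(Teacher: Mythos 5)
Your proposal is correct and follows the paper's own route: part (I) is obtained exactly as in the text by transporting Proposition \ref{luap1}(III) with the isometry $\underline{T}_b$ and using $\underline{T}_b\circ h_a=h\circ R_a$, and part (II) is just (L1), whose elementary verification you sketch is the same computation as Lemma \ref{le_Lyp1}. No gaps; the uniformity-in-$b$ bookkeeping you flag is indeed what Proposition \ref{luap1} and Lemma \ref{l_app1} supply.
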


\section{Proof that $h$ is  co-lipschitz}\label{s_co_lip}
%, Version 2XX  Here we give more rigorous  proof than  in the  previous subsection.  %prove:
Here we give   proof of co-Lip   property:
\begin{theorem}\label{thmain2}
Suppose $h:\mathbb{U}\rightarrow D$ is a hqc homeomorphism, where
 $D$ is a  Lyapunov domain with $C^{1,\mu}$ boundary, i.e. belonging to
$\mathcal{D}_1$. Then $h$ is co-Lipschitz.
\end{theorem}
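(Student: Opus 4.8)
The plan is to establish co-Lipschitz by showing that $\lambda_h(z) \succeq 1$ on $\mathbb{U}$, or equivalently that $d_h(z) \succeq d(z)$ where $d(z) = \mathrm{dist}(z,\partial\mathbb{U})$ and $d_h(z) = \mathrm{dist}(h(z),\partial D)$; once this lower bound holds, the co-Lipschitz property follows by integrating along paths and using the quasi-hyperbolic quasi-isometry property (Proposition \ref{c.gd}). Since the co-Lipschitz estimate is local near the boundary (in the interior it is immediate from $h$ being a homeomorphism that is smooth, together with (S-2)), I would fix a point $a \in \mathbb{T}$, set $b = h(a)$, and work in the normalized setting: compose with rotations and the euclidean isometry $\underline{T}_b$ so that $b = 0$ and $D$ satisfies (Sp0) at $b$.

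First I would invoke (S-1), i.e. Theorem \ref{tmain1}: there is a special Lyapunov domain $U_a$ (equivalently $\hat U_a$) in $\mathbb{U}$ touching $a$, of fixed shape, and a convex special Lyapunov domain ${\rm lyp}(D)_b^-$ in $D$ touching $b$, of fixed shape, with ${\rm lyp}(D)_b^- \subset h(\hat U_a) \subset H_b$ where $H_b$ is a half-plane through $b$. The key point is that $h(\hat U_a)$, while not necessarily convex, is \emph{locally convex at $b$} because it is contained in the half-plane $H_b$. Then I would apply (S-3) to the harmonic map $h$ restricted to the Lyapunov domain $\hat U_a$: since $h(\hat U_a) \subset H_b$ and $h$ is euclidean harmonic, (S-3) gives $d(h(z),b) \succeq d_{\hat U_a}(z)$ for $z \in \hat U_a$. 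Because $\hat U_a$ has a fixed shape and touches $a$, for $z$ on the inner normal to $\mathbb{T}$ at $a$ (or more generally in a fixed Stolz-type subregion) one has $d_{\hat U_a}(z) \approx d(z)$, the distance to $\mathbb{T}$.

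Next I would combine this with (L1) / part (II) of Theorem \ref{tmain1}: if $w - b$ points in the direction of the inner normal $n_b$ of $\partial D$ at $b$, then $d_b(w) = \mathrm{dist}(w, {\rm lyp}(D)_b^-) \approx |w-b|$ for $|w-b|$ small. Taking $z$ on the inner normal to $\mathbb{T}$ at $a$, the image $h(z)$ need not lie exactly on the normal ray to $\partial D$ at $b$, but since $h$ is $K$-qc and $h(\hat U_a)$ is trapped between ${\rm lyp}(D)_b^-$ and $H_b$, one controls the angle and concludes that $\mathrm{dist}(h(z),\partial D) \approx d(h(z),b) \succeq d_{\hat U_a}(z) \approx d(z)$. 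This yields $d_h(z) \succeq d(z)$ along such a ray; using the Gehring–Osgood / Harnack-type distortion of quasi-hyperbolic distance (and that $h^{-1}$ is also qc, so $d_h$ and $d$ are Harnack-comparable along quasi-hyperbolic balls) one upgrades this to $d_h(z) \succeq d(z)$ throughout a fixed neighborhood of $a$ in $\mathbb{U}$, with constants depending only on $K$, $\mu$ and the Lyapunov data of $D$ — \emph{uniformly in} $a$, by the ``fixed shape'' statements in Proposition \ref{luap1}. A compactness argument over $a \in \mathbb{T}$ then gives $d_h(z) \succeq d(z)$ on all of $\mathbb{U}$.

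Finally, from $d_h(z) \succeq d(z)$ on $\mathbb{U}$ I would conclude the co-Lipschitz estimate: for $z_1,z_2 \in \mathbb{U}$, write $|h(z_1)-h(z_2)|$ below by integrating $\lambda_h$ along the $h$-preimage of the quasi-hyperbolic geodesic, use (II) of Theorem \ref{t.in.belowA}-type reasoning locally where $h(\hat U_a)$ is convex (or simply the quasi-isometry in quasi-hyperbolic metric from Proposition \ref{c.gd} applied to $h^{-1}$, together with the comparison of quasi-hyperbolic and euclidean distances near the boundary that $d_h \approx d$ provides), to get $|h(z_1)-h(z_2)| \succeq |z_1-z_2|$. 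The main obstacle, and the place where the new work of the paper is essential, is precisely the step that $h(\hat U_a) \subset H_b$ — i.e. that a qc (not conformal) image of a boundary-touching Lyapunov subdomain can be sandwiched inside a half-plane of fixed shape uniformly in the boundary point; this is exactly (S-1)/Theorem \ref{tmain1}, and without it (S-3) cannot be applied. A secondary technical point is ensuring all constants are uniform in $a \in \mathbb{T}$, which relies on the uniformity built into Proposition \ref{luap1}.
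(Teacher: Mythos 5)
Your proposal is correct and follows essentially the same route as the paper: the sandwich ${\rm lyp}(D)_b^-\subset h(\hat{U}_a)\subset H_b$ from (S-1)/Theorem \ref{tmain1}, the Harnack-type estimate (S-3) (Theorem \ref{thm.space1}/Proposition \ref{Harprop1}) applied to $h$ restricted to $\hat{U}_a$, the geometric comparison (L1)/Lemma \ref{le_Lyp1} between $|w-b|$ and $d_b(w)$, and (S-2)/the quasi-hyperbolic quasi-isometry to turn this into $\lambda_h\approx\Lambda_h\succeq 1$ uniformly in $a\in\mathbb{T}$, with the interior handled by compactness. The only difference is organizational: the paper takes $w$ on the inner normal to $\partial D$ at $b$ and pulls back via $h^{-1}$, which avoids the angle-control and Harnack-type upgrading step you need when you push forward the normal ray at $a$.
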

We first   need a few results mentioned in the introduction.
\begin{theorem}[Theorem 1.3,  \cite{napoc1}]\label{thm.space1}
\begin{itemize}
\hfill \item[\rm{(i)}]  Suppose  that   $\,h\,$ is a euclidean harmonic complex valued  mapping
from the unit ball $\mathbb{B}\subset \mathbb{R}^n$ onto  a
bounded   domain  $D=h(\mathbb{B})$, which
contains the  ball  $\,B(h(0);R_0)\,$  and there is a half space $H_b$  which touches the  point  $b\in  \partial D$  such that  $D=h(\mathbb{B})\subset H_b$.  Then
\item[$\rm{(I)}$]  $d(h(z),b) \geq (1-|z|)\overline{c}_n R_0 $, \,
$z\in \mathbb{B}$,  where $\overline{c}_n=
\frac{1}{2^{n-1}}$.
\end{itemize}
\end{theorem}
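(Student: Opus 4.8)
Since $d(h(z),b)=|h(z)-b|$, the plan is to reduce this vector-valued estimate to a one-dimensional Harnack inequality for a nonnegative harmonic function. After a rigid motion of $\mathbb{R}^n$ we may assume that $b=0$ and that $H_b=\{w\in\mathbb{R}^n:\langle w,e\rangle\ge0\}$, where $e$ is the inner unit normal of $H_b$ at $b$, so the bounding hyperplane of $H_b$ passes through $b=0$. Writing $h=(h_1,\dots,h_n)$ with each $h_j$ harmonic on $\mathbb{B}$, set
$$u(z)=\langle h(z)-b,e\rangle,\qquad z\in\mathbb{B}.$$
Then $u$ is real-valued and harmonic on $\mathbb{B}$; since $h(\mathbb{B})=D\subset H_b$ we have $u\ge0$ on $\mathbb{B}$; and since $|e|=1$ we have $|h(z)-b|\ge\langle h(z)-b,e\rangle=u(z)$. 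Hence it suffices to prove $u(z)\ge(1-|z|)\,\overline{c}_n R_0$.

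Next I record the value at the centre. From $B(h(0);R_0)\subset D\subset H_b$ the point $h(0)-R_0e$ lies in $H_b$, so $\langle h(0)-R_0e-b,e\rangle\ge0$, that is $u(0)=\langle h(0)-b,e\rangle\ge R_0$.

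It remains to establish the Harnack-type lower bound $u(z)\ge\frac{1-|z|}{(1+|z|)^{n-1}}\,u(0)$ for every nonnegative harmonic function $u$ on $\mathbb{B}$. Because $u$ need not extend continuously to $\partial\mathbb{B}$, I would argue by dilation: for $0<\rho<1$ the function $z\mapsto u(\rho z)$ is harmonic on the ball $B(0,1/\rho)\supset\overline{\mathbb{B}}$, hence on $\mathbb{B}$ it is the Poisson integral of its continuous nonnegative boundary values,
$$u(\rho z)=\int_{\partial\mathbb{B}}P(z,\zeta)\,u(\rho\zeta)\,d\sigma(\zeta),\qquad P(z,\zeta)=\frac{1-|z|^2}{|z-\zeta|^n},$$
with $d\sigma$ the normalized surface measure. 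From $1-|z|\le|z-\zeta|\le1+|z|$ we get $P(z,\zeta)\ge\frac{1-|z|^2}{(1+|z|)^n}=\frac{1-|z|}{(1+|z|)^{n-1}}$, and since $P(0,\zeta)\equiv1$ this yields
$$u(\rho z)\ge\frac{1-|z|}{(1+|z|)^{n-1}}\int_{\partial\mathbb{B}}u(\rho\zeta)\,d\sigma(\zeta)=\frac{1-|z|}{(1+|z|)^{n-1}}\,u(0).$$
Letting $\rho\to1^-$ and using continuity of $u$ at the interior point $z$ gives the bound. Finally $(1+|z|)^{n-1}\le2^{n-1}$, so $u(z)\ge\frac{1-|z|}{2^{n-1}}\,u(0)\ge(1-|z|)\,\overline{c}_nR_0$ with $\overline{c}_n=2^{-(n-1)}$, and combining with $|h(z)-b|\ge u(z)$ completes the proof.

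The argument is essentially routine; the only delicate point is the passage to the boundary, since no boundary regularity of $h$ is assumed — this forces the approximation by $u(\rho\,\cdot)$ rather than a direct Poisson representation of $u$. The rest is bookkeeping, in particular checking that the Poisson-kernel estimate produces precisely the constant $2^{-(n-1)}$.
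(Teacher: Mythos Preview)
Your proof is correct and follows exactly the approach the paper outlines: define the nonnegative harmonic function $u(z)=(h(z)-b,n_b)$, observe $u(0)\ge R_0$ from the ball inclusion, and apply Harnack's inequality on $\mathbb{B}$ to obtain the constant $2^{-(n-1)}$. The paper only sketches this and refers to \cite{napoc1} for the Harnack step, whereas you supply the Poisson-kernel estimate and the dilation argument in full; otherwise the arguments are identical.
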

%%%5
Sometimes,  we refer  to  this  result  as a version of  Harnack's   lemma.

In  \cite{napoc1}  we stated  this result under the  condition that the domain  $D=h(\mathbb{B})$ is convex. But, a slight modification
of the proof of  Theorem 1.1  \cite{napoc1} (planar case) shows that  the theorem holds under  the hypothesis (a).
% every

{\it Proof of} $(I)$. We only outline an argument.  To  $b\in \partial D$  we associate
a nonnegative harmonic function $u=u_b$.
%Since  $D$ is convex, for$b\in\partial D$
% there where
Let  $\Lambda_b$ be  the boundary of  $H_b$  and  let  $n=n_b\in T_b\mathbb{R} ^n$  be   a  unit vector  such that  $\Lambda_b$  is  defined by $(w-b,n_b)=0$.
By hypothesis,  $\Lambda_b$  is a  supporting hyper-plane  such that
$(w-b,n_b)\geq 0$ for every $w\in \overline{D}$.
%let     $\Lambda_a$ be  the line  defined by $(w-a,n_a)=0$, where
%$n=n_a$  is    is unit inner normal at $a$ with respect to  $\partial D$.
% a unimodular complex number such
% Since  $D$ is convex,   then  $(w-a,n_a)\geq 0$ for every $w\in \overline{D}$.
Define $u(z)=(h(z)-b,n_b)$  and  $d_b= d(h(0),\Lambda_a)$.  Then
$u(0)=(h(0)-b,n_b)= d(h(0),\Lambda_a)$.  Let  $b_0 \in \Lambda_a$
be the point such that  $d_b= |h(0)- b_0|$.  Then   from the geometry it is clear that $d_a \geq R_0$,etc (one can follow the proof from \cite{napoc1}).
\hfill  $\Box$
%\Omega=
\begin{proposition}\label{Harprop1}
Suppose  that   $\,h\,$ is    a  euclidean harmonic mapping
from the Lyapunov domain  $G$  into  a domain $\Omega$   and \\
(i)  there is a half space $H_b$  which touches a  point  $b\in  \partial \Omega$  such that    $h(G)\subset H_b$.\\
%the curve  \mathbb{B}
Then  $d(h(z),b) \succeq d_G(z) $,    $z \in G$.
\end{proposition}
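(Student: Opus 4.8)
\textbf{Proof proposal for Proposition \ref{Harprop1}.}

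The plan is to reduce the statement to the case of the unit disk, where Theorem \ref{thm.space1}(I) applies directly, and then transfer the estimate back to $G$ by a conformal change of variables, keeping track of distances via the Kellogg--Warschawski theorem. First I would fix $z_0 \in G$ and let $\psi$ be a conformal mapping of $G$ onto $\mathbb{U}$; composing with a Möbius automorphism of $\mathbb{U}$ we may assume $\psi(z_0)=0$. Since $G$ is a Lyapunov domain, Kellogg's theorem (item (e) in subsection \ref{ssAux}) gives constants $k_1,k_2>0$ with
$$ k_1 |z_1-z_2| \le |\psi(z_1)-\psi(z_2)| \le k_2 |z_1-z_2|, \qquad z_1,z_2\in G, $$
so $\psi$ is bi-Lipschitz; in particular $d_G(z) \approx (1-|\psi(z)|) = d_{\mathbb{U}}(\psi(z))$ (the comparison of $1-|\psi(z)|$ with $d_G(z)$ follows since $\psi$ is bi-Lipschitz and maps $\partial G$ onto $\mathbb{T}$).

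Next I would consider $\tilde h := h \circ \psi^{-1}$, which is a euclidean harmonic mapping of $\mathbb{U}$ into $\Omega$, still satisfying $\tilde h(\mathbb{U}) = h(G) \subset H_b$. To invoke Theorem \ref{thm.space1}(I) I need a ball $B(\tilde h(0);R_0) = B(h(z_0);R_0)$ contained in $h(G)$; this is available because $h(G)$ is open and $h(z_0)$ is an interior point, giving some $R_0 = R_0(z_0) > 0$. Then Theorem \ref{thm.space1}(I) with $n=2$ (so $\overline c_2 = 1/2$) yields
$$ d(\tilde h(\zeta), b) \ge (1-|\zeta|)\,\tfrac{1}{2}\,R_0, \qquad \zeta \in \mathbb{U}. $$
Writing $\zeta = \psi(z)$ and combining with $1-|\psi(z)| \succeq d_G(z)$ gives $d(h(z),b) \succeq d_G(z)$ for $z$ in a neighbourhood of $z_0$, hence — since the implied constant can be taken uniform by a standard localization/compactness argument on $\overline G$, or simply because only behaviour near $\partial G$ matters (away from $\partial G$ both sides are bounded below) — the desired global inequality $d(h(z),b) \succeq d_G(z)$, $z\in G$.

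The main obstacle I anticipate is justifying that the comparison constant in $d(h(z),b) \succeq d_G(z)$ is genuinely \emph{uniform} over all of $G$ and does not degenerate as $z$ approaches different boundary points: the inradius $R_0$ that appears in the Harnack-type bound a priori depends on the choice of the base point $z_0$, and a naive argument only gives a local estimate. One clean way around this is to pick a single fixed base point $z_0\in G$ with a fixed $R_0$ such that $B(h(z_0);R_0)\subset h(G)$, apply Theorem \ref{thm.space1}(I) once with $\psi(z_0)=0$, and then use the bi-Lipschitz bound for $\psi$ to convert $(1-|\psi(z)|)$ into $d_G(z)$ uniformly; since Theorem \ref{thm.space1}(I) already provides the estimate for all $\zeta\in\mathbb U$ simultaneously, no patching is needed. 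Thus the only genuinely delicate point is the clean comparison $1-|\psi(z)| \approx d_G(z)$ near $\partial G$, which is exactly where the Lyapunov (indeed already $C^1$) hypothesis on $G$ is used through Kellogg's theorem.
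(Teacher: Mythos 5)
Your proposal is correct and follows essentially the same route as the paper: compose $h$ with a conformal map between $G$ and $\mathbb{U}$, apply the Harnack-type estimate of Theorem \ref{thm.space1} to the resulting harmonic map of the disk, and transfer $1-|\zeta|\approx d_G(z)$ back using the boundary behaviour of the conformal map (the paper cites Koebe's theorem, with Kellogg's theorem for the Lyapunov domain playing the role you give it). Your explicit remark that a single fixed base point $z_0$ and radius $R_0$ make the constant uniform is a useful clarification of a point the paper leaves implicit, but it does not change the argument.
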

Note if   $h: \mathbb{U} \rightarrow D$ satisfies  hypothesis  $U$-1,  in general  a point $b\in  \partial D$  does not satisfy the
hypothesis (i). We use  elementary Lyapunov domain  described  in  Proposition  \ref{luap1} to apply this proposition.
\begin{proof}
Let   $\phi: \mathbb{U}\rightarrow G$ be a conformal mapping   and   $h_1=h\circ \phi$.  Application  of Koebe's theorem to  $\phi$   and
Theorem   \ref{thm.space1}  on  $h_1:\mathbb{U}\rightarrow G$ yield the result.
\end{proof}
%%%%%% first
Now  we  illustrate relation between  the circles and special Lyapunov curves and then   prove Lemma \ref{le_Lyp1}.

If   $M(0,d)$, $d>0$,  then the circle $C$  with  center  at $M$ and radius  $d$ is
given by the equation $x^2 +(y-d)^2=d^2$  and the  half -circle  $C^-$  with  $y=d- (d^2 -x^2)^{1/2}$.
Hence    $d-y=(d^2 -x^2)^{1/2}=d (1- x^2/d^2)^{1/2} = d(1- x^2/2 d^2 + o(x^2)= x^2/2 d   + o(x^2)$  and therefore  $y= \frac{1}{2 d}x^2 + o(x^2) $.
 The graph of the curve  $\gamma(c,\mu; \epsilon)$, where  $c= 1/ d , \epsilon=d$ is above the   half -circle $C$.
%lemma \ref{le_Lyp1}  $\gamma(c,\mu; \epsilon)$
%the curve $f(c,\mu)$ For  $c>0$, $0<\mu < 1$,  and $x_0 > 0$ the curve $f(c,\mu)$  in the xy-plane is  defined  by ,  $|x|< x_0$,
\begin{lemma}\label{le_Lyp1}
 For  $c>0$, $0<\mu < 1$,  and $x_0 > 0$, let the curve $C$ be defined by  \rm{(} the curve  $C$ is  defined in Example  \ref{ex1} and denoted  by  $f(c,\mu)$ \rm{)}
\begin{itemize}
\item[(1):]  $y=C(x)=cx^{1+\mu}$,$|x|< x_0$,    and
\item[(2):] let  $M(0,d)$, $d>0$,   be a point and $d'$ the distance from $M$  to the graph  of the curve \rm{(1)}.
\item[(I)]  Then  $d' \succeq d$   and
\item[(a)]  there is  an $\epsilon^0>0$  such that if   $d\leq C(\epsilon^0)$,   then  $d\leq  2  d'$.
\item[(b)]  For $ \epsilon^0 $ we can choose the positive solution of the equation   $c^2(1+\mu)x_1^{2\mu}= 1$.
\end{itemize}
\end{lemma}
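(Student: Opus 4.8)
The plan is to reduce the distance estimate to an explicit one-variable calculus computation. First I would set up coordinates so that the point is $M=(0,d)$ with $d>0$ and the curve is $y=cx^{1+\mu}$, $|x|<x_0$. For a point $(x,cx^{1+\mu})$ on the curve, I consider the squared distance to $M$,
\[
S(x)=x^2+\bigl(cx^{1+\mu}-d\bigr)^2,
\]
so that $d'^2=\inf_{|x|<x_0}S(x)$, and the quantity $d'\succeq d$ amounts to a lower bound $S(x)\ge \kappa\, d^2$ for a constant $\kappa=\kappa(c,\mu,x_0)$ and all admissible $x$.

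The key observation for part (I) is a dichotomy on the size of $|x|$ relative to $d$. If $|x|$ is comparable to or larger than $d$ (say $|x|\ge d/2$), then already $S(x)\ge x^2\ge d^2/4$, so $d'\ge d/2$. If instead $|x|$ is small compared to $d$, then the term $cx^{1+\mu}$ is much smaller than $d$ once $|x|$ is small enough (since $1+\mu>1$), hence $\bigl(cx^{1+\mu}-d\bigr)^2\ge d^2/4$ and again $S(x)\ge d^2/4$. Choosing the threshold carefully — it suffices that $c|x|^{1+\mu}\le d/2$ whenever $|x|<d/2$, which holds for $d$ below a fixed constant depending on $c,\mu$ — glues the two cases and yields $d'\ge d/2$, i.e. both $d'\succeq d$ and the sharper statement (a): $d\le 2d'$ once $d\le C(\epsilon^0)$ for a suitable $\epsilon^0$. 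Note also the reverse inequality $d'\le d$ is trivial, taking $x=0$ on the curve, so $d'\approx d$ in the relevant range.

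For part (b), pinning down the explicit value of $\epsilon^0$, I would locate the threshold by asking when $x=0$ is actually (close to) the minimizer, equivalently when $S'(x)$ does not vanish for small $x>0$ other than forcing the minimum near $0$. Computing
\[
S'(x)=2x+2\bigl(cx^{1+\mu}-d\bigr)c(1+\mu)x^{\mu}
     =2x\Bigl(1+c^2(1+\mu)x^{2\mu}-\tfrac{c(1+\mu)d}{x^{1-\mu}}x\Bigr),
\]
the benign case is exactly when the factor $1+c^2(1+\mu)x^{2\mu}$ dominates, and the natural cutoff where the ''pure curve'' contribution $c^2(1+\mu)x^{2\mu}$ reaches size $1$ is the positive root $x_1$ of $c^2(1+\mu)x_1^{2\mu}=1$; taking $\epsilon^0=x_1$ (intersected with $x_0$) makes the argument of the dichotomy go through with clean constants. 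The main obstacle I anticipate is purely bookkeeping: making sure the chosen $\epsilon^0$ simultaneously (i) keeps us inside the domain $|x|<x_0$ of the curve, (ii) makes $c|x|^{1+\mu}\le d/2$ on the ''small $|x|$'' regime when $d\le C(\epsilon^0)$, and (iii) produces the constant $2$ rather than some other multiplicative constant in (a) — but none of this is deep, it is just a matter of ordering the inequalities correctly.
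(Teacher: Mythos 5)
Your proposal is correct in substance, but it follows a genuinely different route from the paper. You minimize the squared distance $S(x)=x^2+(cx^{1+\mu}-d)^2$ directly, via the dichotomy $|x|\ge d/2$ (then $S\ge d^2/4$) versus $|x|<d/2$ with $d$ small enough that $c|x|^{1+\mu}\le d/2$ (then again $S\ge d^2/4$), which yields $d\le 2d'$ for all $d$ below an explicit threshold without ever locating the minimizer. The paper instead works at the foot point $M'=(x_1,y_1)$ of the distance: perpendicularity of $MM'$ to the tangent gives $\frac{d-y_1}{x_1}=\bigl(c(1+\mu)x_1^{\mu}\bigr)^{-1}$, hence the identity $d=\frac{x_1^{1-\mu}}{c(1+\mu)}\bigl(1+c^2(1+\mu)x_1^{2\mu}\bigr)$, and since $d'\ge d-y_1$ this gives $d\le 2d'$ exactly when $c^2(1+\mu)x_1^{2\mu}\le 1$ — so the specific root in (b) falls out immediately. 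Your argument is more elementary and more robust (it does not assume the nearest point is an interior critical point of the graph), but it buys (b) only after an extra check you left implicit: with $\epsilon^0$ the root of $c^2(1+\mu)x_1^{2\mu}=1$ one has $C(\epsilon^0)=c^{-1/\mu}(1+\mu)^{-(1+\mu)/(2\mu)}\le c^{-1/\mu}$, so $d\le C(\epsilon^0)$ does imply $cd^{\mu}\le 1$ and hence $c(d/2)^{1+\mu}\le d/2$, which is what your second case needs; you should spell this out rather than assert it. Two further small points: your factored form of $S'(x)$ carries a spurious factor of $x$ in the last term (it should read $1+c^2(1+\mu)x^{2\mu}-c(1+\mu)d\,x^{\mu-1}$), though this is only used heuristically; and your dichotomy proves $d'\succeq d$ only for $d$ below the threshold — for (I) as stated you would add the trivial observation that for $d$ bounded away from $0$ (and the arc $|x|<x_0$ bounded) the distance $d'$ is bounded below, e.g. $d'\ge d-cx_0^{1+\mu}\ge d/2$ for large $d$ and compactness in the intermediate range.
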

\begin{proof}   Let    $d'= |M-M'|$,   where  $M'(x_1,y_1)$.
%$M'(x_1,y_1)$
Since
$y'(x)= c (1+\mu) x^{\mu}$,  we find   that     $k= \frac{d-y_1}{x_1}=   (c (1+\mu) x_1^{\mu})^{-1}$.
%,  andthen     $k= \frac{d-y_1}{x_1}=(c (1+\mu) x_1^{\mu})^{-1}$.
Hence,  $d-y_1=  \frac{1}{c(1+\mu)}  x_1^{1-\mu}$  and  $d=  \frac{1}{c(1+\mu)}  x_1^{1-\mu} +y_1= \frac{1}{c(1+\mu)}  x_1^{1-\mu}(1 +c^2(1+\mu)x_1^{2\mu})$.
Hence,   $d\leq  2  d'$ if   $ (1 +c^2(1+\mu)x_1^{2\mu})\leq 2 $ (that is  if $ \varepsilon_2=\varepsilon_2(c_2,\mu_2)$ is a small enough constant).
\end{proof}
%(see subsection \ref{ss})
 We are now ready to finish the proof.  We will apply  Proposition  \ref{luap1} and notation used there,  and  Theorem  \ref{thm.space1}.
Further  chose a fixed positive real number  $x_0 \in \hat{U}_1$.
%%%%%%%%%%%%%%%%%%%%%%%%%%%%%%%%%%%%%%%%%%%%%%%%%%%%%%%%%%%%%%%%%

(E)  Set   $H(a)=h_a(x_0)=T_b^{-1}\big(h(x_0 a)\big)$,  $w'=T_b^{-1}(w)$,    $d_0(w')=dist(w',\partial D_0^-)$  and   $d^0(a)=d_0\left(T_b^{-1}\big(h(x_0 a)\big)\right)$, where
$b=h(a)$. It is straightforward to check  that  $H$ and $d^0$ are continuous function with respect  to  $a \in \mathbb{T}$. Hence  there is $s_0>0$  such that   $B(H(a),s_0)\subset D_0^-$ and therefore  we conclude

(E0) If $h$ satisfies the hypothesis $\mathbb{U}_{qc}^0$, there is a constant $s_0>0$ which does not depend of $a$  such that     $B(h(x_0 a),s_0)\subset D_b^-$, $a \in \mathbb{T}$.

%%%%%%%%%%%%%%%%%%%%%%%%%%%%%%%%%%%%%%%%%%%%%%%%%%%%%%%%%%%
Recall that  for   $a=e^{i\alpha}  \in \mathbb{T}$, we  let $\phi=\phi_a$
be the conformal mapping of  $\mathbb{U}$  onto   $U_a$   such that  $\phi_a(0)=x_0 e^{i\alpha}$  and $F=F(h)=F_a= h\circ \phi_a$.
Let  $b\in  \partial D$,    $w-b=\epsilon n_b$, $\epsilon \leq \epsilon_0$. Then $w\in D_b^-$.

%Set   $a=h^{-1}(b)$, $z=h^{-1}(w)$,  $z'=\phi_a^{-1}(z)$,    $z=h^{-1}(w)$, $d(z')= 1-|z'|$, $r'=|z'|$, $d_a(z)=dist(z,\hat{U}_a)$,
%$d_a^+(z)=dist(z,U_a^+)$  $U_a^-=h^{-1}(D_b)$,
%and $d_b(w)=dist(w,\partial D_b^-)$.
\begin{eqnarray}
 \mbox{Set}\,  \,   a&=&h^{-1}(b), \, z=h^{-1}(w),\,  z'=\phi_a^{-1}(z), \,  d(z')= 1-|z'|,\,  r'=|z'|, \nonumber\\
 d_a(z)&=&dist(z,\partial \hat{U}_a),\,   d_b(w)=dist(w,\partial D_b^-) \quad   \mbox{and} \quad  d_0(w')=dist(w',\partial D_0^-). \nonumber
\end{eqnarray}
Let  $H_b$ be the half plane  which  contains $\hat{D}_b$ and touches $D$ at  $b$.  Since  $F_a$ maps  $\mathbb{U}$ into $H_b$,  $(F_a(z') -b,n_b)$   is non-negative in
$z'\in \mathbb{U}$, and  by a version of Harnack's estimate,   $|F_a(z') - b|\geq s^0 (1-r')$, i.e.     $|w - b|\geq s^0 d(z')$.
Hence   (1)   $|w-b|\succeq d(z')$.
% $d \Lambda_f \succeq d_*$\approx    \preceq

From Lemma \ref{le_Lyp1}  (the geometric property of the domain $D_b^-$),  we find $|w-b|\preceq d_b(w)$  and therefore
\begin{itemize}
\item [(3)]   \quad   $d_b(w)\succeq d(z')$.
\end{itemize}
Set $\epsilon_0 := \min\{\varepsilon_0^-,  \epsilon^0\}$ and   $D(\epsilon_0):= \{w\in D : d(w,\partial)\geq \epsilon_0\}$. Then there is $r_0,r_1\in(0,1)$  such that  $D(\epsilon_0)\subset h(B(r_0))$  and   $r'=|z'|\geq r_1$
implies  $|z|\geq r_0$.
%$B(r_0)\subset h(B(r_1))$.

By Theorem  \ref{thm.space1} (note here that  the estimate in this theorem depends on  $R_0$ and it is independent of $h$)  and (E0), we conclude

(E1)  there is  $s_1>0$  which is independent of $z'$  such that   $d_b(w)\geq s_1 d(z')$,  $r'=|z'|\geq r_1$.

Recall that  $d'_b(w)= dist (w, Y_a)$, where $Y_a:=h(\hat{U}_a)$.
We now estimate $\Lambda_h(z)$.  Using the fact that $h(\hat{U}_a)= Y_b \supset D_b^-$, we find  first that $d'_b(w) \geq  d_b(w)$  and  therefore
$$\Lambda_h(z)\succeq \frac{d'_b(w)}{d_a(z)}\geq \frac{d_b(w)}{d_a(z)} .$$

Since $\phi_a(\mathbb{U})=\hat{U}_a$  and  $\hat{U}_a$  is a Lyapunov domain  of a fixed shape,  $d_a(z) \approx  d(z')$.

Combining this with (3),  we conclude   $\lambda_h(z) \approx \Lambda_h(z)\geq s_2>0$, where $s_2>0$  is a constant. Hence,  using (E1), we conclude

(F)   $\lambda_h(z) \approx \Lambda_h(z)\geq s_2>0$, $|z|\geq r_0$,  where $s_2>0$  is a constant  independent of $z$.

It is clear that there  is a constant   $s_3>0$  such that

(F1) $\lambda_h(z) \approx \Lambda_h(z)\geq s_3>0$, $z\in B(r_0)$.

By (F) and (F1),   there  is a constant   $s_4>0$  such that  $\lambda_h(z) \approx \Lambda_h(z)\geq s_4>0$, $z\in \mathbb{U}$.

Hence it is readable  that  $h$ is co-Lip  on  $\mathbb{U}$.

\begin{remark}  By   application   of   Proposition    \ref{Harprop1}  onto the  restriction of $h$   on  $\hat{U}_a$    one can also get a  proof    of   Theorem  \ref{thmain2}.
\end{remark}

\section{Further comments and related results}
%C. J.
We briefly discus  the connection  with
the Rad\'{o}-Kneser-Choquet theorem (shortly RKC-Theorem) and hyperbolic-harmonic mappings;  it will be the subject of further investigations.\\
 Quasiconformal euclidean-harmonic mappings are  bi-Lipschitz  with respect to the
quasi-hyperbolic metric, cf. \cite{napoc1,man} (Proposition \ref{c.gd} here).
It turns out that, as in the euclidean case, quasiconformal hyperbolic-harmonic mappings    are  bi-Lipschitz    with respect to the
hyperbolic metric, cf.  Wan \cite{wan1} and of Markovic \cite{vmar1}.\\
%For related results about quasiconformal harmonic mappings with respect to the hyperbolic metric we refer to the paper of Wan \cite{wan1} and of Markovi\'c \cite{vmar1}.
%%%%% e Ins 1  the result of Li and Tam  (cf. [12]in \cite{marInv}) that every diffeomorphism of $\mathbb{S}^2$ admits a harmonic quasiisometric extension
Very recently,  concerning the initial   Schoen Conjecture (and more generally  the Schoen-Li-Wang conjecture) Markovic made a  major breakthrough.
In \cite{vmar2},   Markovic used the result of Li and Tam  that every diffeomorphism of $\mathbb{S}^2$ admits a harmonic quasiisometric extension to show that every quasisymmetric homeomorphism of the circle $\partial \mathbb{H}^2$ admits a harmonic quasiconformal extension to the hyperbolic plane $\mathbb{H}^2$. This proves the initial   Schoen Conjecture.

In particular, concerning complex valued harmonic functions,  Kalaj and  the second author,
%Mateljevi\'c
shortly KM-approach, study lower bounds of the Jacobian, cf. \cite{rckm-fil,MMfilInv} and references cited there.
The corresponding results for harmonic maps between surfaces were
 previously obtained by  Jost and Jost-Karcher  \cite{jost2,jost}.
We refer to this result as the JK- result (approach).
%%%%%
%{mm.unpub2} {aaa} harmonic mappings to
%%%%%
G. Alessandrini and  V. Nesi  prove necessary and   sufficient criteria of invertibility for planar harmonic mappings
which generalize a classical result of H. Kneser, also known as
the Rad\'{o}-Kneser-Choquet theorem (RKC-Theorem), cf. \cite{ales.nes}.
%Recently   Iwaniec- Onninen   have communicated a new   proof of  the RKC theorem
% cf. \cite{IwOn}. We refer to this    communication shortly as IwOn-approach.  It seems that there is some overlap between  KM- results  with  \cite{IwOn} and   \cite{jost2,jost}.
Note only here that in the planar case the JK- result  is   reduced to Theorem RKC.
Kalaj \cite{kal.studia} also  has extended  the Rado-Choquet-Kneser
theorem to mappings between the unit circle  and Lyapunov  closed curves  with Lipschitz boundary data and essentially
positive Jacobian at the boundary (but without restriction on the
convexity of the image domain). The proof is based on
the  extension of the Rado-Choquet-Kneser theorem by Alessandrini and
Nesi  \cite{ales.nes2} and  an approximation scheme is used in it.
 Motivated by an approach described in Kalaj's  Studia paper\cite{kal.studia}  and using the continuity of so called $E$-function,  the second author  found  a new proof of  Kalaj's  result, cf. \cite{rckm-fil,MMfilInv}.

{\it Acklowedgment}.
We are indebted  to  M.  Svetlik for helping us in preparation this manuscript. In particular we thank    him for making the Figure 1.
We are indebted  to the   referee,  N. Mutavd\v zi\'c   and D. Kalaj  for useful comments which improved the exposition.

{}
\end{document}